\newcommand{\leqnomode}{\tagsleft@true}
\newcommand{\reqnomode}{\tagsleft@false}
\newcounter{tbox}
\newtheorem{lemma}{Lemma}[section]
\newtheorem{definition}[lemma]{Definition}
\newtheorem{theorem}[lemma]{Theorem}
\newtheorem{observation}[lemma]{Observation}
\theoremstyle{definition}
\def\nospace{\kern -20pt}
\title{Orientations of graphs with at most one directed path between every pair of vertices}
\def\mail#1{\href{mailto:#1}{#1}}
\author{
    Barbora Dohnalová  \thanks{Department of Applied Mathematics, Charles University, Prague, Czechia, Email: \mail{bdohnalova@matfyz.cz, jirikalvoda@kam.mff.cuni.cz, gaurav@kam.mff.cuni.cz}. Supported by European Union’s Horizon 2020 research and innovation programme under the Marie Sklodowska-Curie grant agreement No. 823748.}
    \and
    Jiří Kalvoda \footnotemark[1]
    \and
    Gaurav Kucheriya \footnotemark[1] \thanks{Supported by GA\v{C}R grant 22-19073S, SVV–2023–260699.}
    \and
    Sophie Spirkl \thanks{Department of Combinatorics and Optimization, University of Waterloo, Canada Email: \mail{sspirkl@uwaterloo.ca}. We acknowledge the support of the Natural Sciences and Engineering Research Council of Canada (NSERC), [funding reference number RGPIN-2020-03912]. Cette recherche a \'et\'e financ\'ee par le Conseil de recherches en sciences naturelles et en g\'enie du Canada (CRSNG), [num\'ero de r\'ef\'erence RGPIN-2020-03912]. This project was funded in part by the Government of Ontario.}%
}
\date{}
\begin{document}
	\maketitle
	\begin{abstract}
	    Given a graph $G$, we say that an orientation $D$ of $G$ is a \emph{KT orientation} if, for all $u, v \in V(D)$, there is at most one directed path (in any direction) between $u$ and $v$. Graphs that admit such orientations have been used by Kierstead and Trotter (1992), Carbonero, Hompe, Moore, and Spirkl (2023), Bria\'nski, Davies, and Walczak (2024), and Gir\~ao, Illingworth, Powierski, Savery, Scott, Tamitegami, and Tan (2024) to construct graphs with large chromatic number and small clique number that served as counterexamples to various conjectures. 
     
        Motivated by this, we consider which graphs admit KT orientations (named after Kierstead and Trotter). In particular, we construct a graph family with small independence number (sub-linear in the number of vertices) which admits a KT orientation. We show that the problem of determining whether a given graph admits a KT orientation is NP-complete, even if we restrict ourselves to planar graphs. Finally, we provide an algorithm to decide if a graph with maximum degree at most 3 admits a KT orientation, whereas, for graphs with maximum degree 4, the problem remains NP-complete. 
	\end{abstract}

    \section{Introduction}

    All graphs in this paper are simple and finite. Let $G$ be a graph and $k \in \mathbb{N}$. A \emph{$k$-colouring} of $G$ is a function $f: V(G) \rightarrow \{1, \dots, k\}$ such that $f(u) \neq f(v)$ for all $uv \in E(G)$. If a $k$-colouring of $G$ exists, we say that $G$ is $k$-colourable. For a graph $G$, let $\chi(G)$ denote its \emph{chromatic number}, the minimum number of colors required so that $G$ is $\chi(G)$-colorable, and let $\omega(G)$ denote its \emph{clique number}. A family $\mathcal G$ of graphs is \textit{$\chi$-bounded} (see \cite{gyarfas1985problems}) if there exists a function $f$ such that $\chi(H)\le f(\omega(H))$ for all $G \in \mathcal G$ and each induced subgraph $H$ of $G$. We refer the reader to \cite{chi_boundedness} for a survey of $\chi$-boundedness. 

    Several counterexamples related to $\chi$-boundedness are based on a similar idea: We start with a directed graph $D$ with the property that for all $u, v \in V(D)$, there is at most one directed path with $u$ and $v$ as its ends (we call such a digraph a \emph{KT orientation}). Then, we fix $k \in \mathbb{N}$ and $S \subseteq \{1, \dots, k-1\}$; we construct $D(k, S)$ by adding an edge from $u$ to $v$ if there is a directed path from $u$ to $v$ in $D$, and its length is congruent, modulo $k$, to a number in $S$. The first example of this, due to Kierstead and Trotter \cite{colorful_ind_subg}, shows that graphs that admit an orientation with no induced monotone four-vertex path (that is, a $P_4$ with the orientation $\to\to\to$) are not $\chi$-bounded. Similar ideas were subsequently used \cite{brianski2024separating, trianglefree_contraexample, large_chromatic} to construct counterexamples to long-standing questions on $\chi$-boundedness. 

    This motivates the following definition: We say that a graph $G$ \textit{admits a KT orientation} if there is an orientation $D$ of $G$ which is a KT orientation. We formally define this in Section~\ref{sec:preliminaries} along with some preliminary observations and results.

    What can we say about graphs that admit KT orientations? Admitting a KT orientation suggests that the underlying graph is ``sparse'' in some sense; however, in Section \ref{sec:small_independence}, we show that there are graphs without linear-sized independent sets that admit a KT orientation using a variation of twincut graphs from \cite{twincut}. Next, we turn our attention to graphs that do not admit a KT orientation. Prior to this work, all known examples of such graphs contained cycles of length at most five \cite{sadhukhan2024shift}. Sadhukhan \cite{sadhukhan2024shift} conjectured that there are graphs with large girth that do not admit KT orientations. In Section \ref{sec:girth}, we settle this conjecture by constructing graphs of arbitrarily large girth with no KT orientation. 
    
    One of the applications of directed graphs that admit KT orientations relates to the Routing and Wavelength Assignment (RWA) problem in optical networks (see~\cite{mukherjee1997optical}). It aims to find routes and their associated wavelengths to satisfy a set of traffic requests while minimizing the number of wavelengths used. This problem is NP-hard in general, so it is usually split into two distinct problems: first step is to find routes for the requests and then for the second step, take these routes as input to solve the wavelength assignment problem. In \cite{bermond2013directed}, Bermond, Cosnard, and P\'erennes study the RWA problem for digraphs that are KT orientations; in this case, the routing is forced and thus the only problem to solve is the wavelength assignment one.
    
    In \cite{bermond2013directed}, Bermond, Cosnard, and P\'erennes  introduced the notion of good edge-labeling of a graph. A \emph{labeling} of a graph $G=(V,E)$ is a function $\phi:E \to \mathbb{R}$. The labeling is \emph{good} if the labels are distinct and for any ordered pair of vertices $(x,y)$, there is at most one path from $x$ to $y$ with increasing labels. In \cite{araujo2012good}, Araujo, Cohen, Giroire, and Havet show that deciding whether a graph has a good edge-labeling is NP-hard. Since a admitting good edge-labeling is similar admitting a KT orientation, \cite{bermond2013directed} conjectured that deciding whether a given graph has a KT orientation is NP-hard. In Section \ref{sec:np}, we prove this result. Furthermore, we also show that this problem is NP-complete, even when restricted to the following graph families: planar graphs, graphs with maximum degree at most four, and graphs of large girth. On the other hand, Section \ref{sec:algorithm} contains a polynomial-time algorithm for determining whether a graph with maximum degree at most 3 admits a KT orientation. 
    
    \section{Preliminaries}
    \label{sec:preliminaries}
    We begin with some definitions. 
    
    \begin{definition}
        Let $G$ be a graph and let $K$ be an orientation on $G$. We say that $K$ is a \emph{KT orientation} on $G$ if for all vertices $u,v$ of $G$, there is at most one directed path (in any direction) between $u$ and $v$. We say that $G$ \emph{admits a KT orientation} if there is at least one orientation of $G$ which is a KT orientation.
    \end{definition}
    In particular, we note that a KT orientation is necessarily acyclic. 
    
    \begin{figure}[H]
        \centering
        \label{fig:kt}
        \includegraphics[]{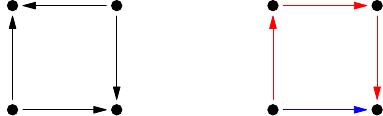}
        \caption{Left: A KT orientation of $C_4$. Right: Not a KT orientation of $C_4$ as there are two directed paths (in red and blue, respectively) between a pair of vertices.}
    \end{figure}

    \begin{observation}\label{obs:bip}
        Every $k$-colourable graph of girth at least $2k-1$ admits a KT orientation. In particular, every bipartite graph admits a KT orientation. 
    \end{observation}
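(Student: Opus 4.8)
The plan is to build the orientation directly from the colouring. Fix a proper $k$-colouring $f : V(G) \to \{1, \dots, k\}$ and let $D$ be the orientation of $G$ obtained by directing every edge from its endpoint of smaller colour to its endpoint of larger colour; this is well defined since the two endpoints of an edge receive distinct colours. The key property to record is that colours strictly increase along every directed path: if $P$ is a directed path in $D$ with ends $x$ and $y$, directed from $x$ to $y$, then reading $f$ along $P$ gives a strictly increasing sequence of values in $\{1, \dots, k\}$, so $P$ has at most $k$ vertices and hence length at most $k-1$. (In particular $D$ is acyclic.)

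Next I would suppose, for contradiction, that some pair $u, v$ has two distinct directed paths $P_1 \neq P_2$ between them. Because colours strictly increase along each path, each of $P_1, P_2$ is directed from its end of smaller colour to its end of larger colour; relabelling if necessary, assume this common direction is from $u$ to $v$, so that $f(u) < f(v)$. I would then pull a short cycle out of $P_1 \cup P_2$ in the usual way: walking along $P_1$ from $u$, let $a$ be the last vertex up to which $P_1$ and $P_2$ coincide (this exists and $a \neq v$, since otherwise $P_1 = P_2$), and let $b$ be the first vertex strictly after $a$ on $P_1$ that also lies on $P_2$ (this exists because $v$ does). Writing $Q_1$ and $Q_2$ for the $a$--$b$ subpaths of $P_1$ and $P_2$ respectively, one checks that $Q_1$ and $Q_2$ are internally disjoint and distinct, so $C := Q_1 \cup Q_2$ is a cycle.

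The one point needing care is that $Q_2$, like $Q_1$, is a directed path \emph{from $a$ to $b$}: from $Q_1$ we already know $f(a) < f(b)$, and since colours increase along $P_2$ this forces $a$ to precede $b$ on $P_2$, so $Q_2$ is indeed directed from $a$ to $b$. Hence $Q_1$ and $Q_2$ are both directed paths from $a$ to $b$, so by the key property each has length at most $k-1$, and therefore $C$ has length at most $2(k-1) = 2k-2$. This contradicts the hypothesis that $G$ has girth at least $2k-1$, so $D$ is a KT orientation. The ``in particular'' statement then follows by taking $k = 2$: every bipartite graph is $2$-colourable, and every simple graph has girth at least $3 = 2 \cdot 2 - 1$ (reading a forest as having infinite girth).

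The only real obstacle is the bookkeeping in the middle step: confirming that the cycle extracted from two distinct directed paths has both of its two arcs oriented consistently (each from $a$ to $b$), so that the length bound $k-1$ applies to each arc and the total length stays below $2k-1$. Everything else is immediate from the observation that colours strictly increase along directed paths in $D$.
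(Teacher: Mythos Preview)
Your proof is correct and follows essentially the same approach as the paper: orient each edge toward its end of larger colour, observe that colours strictly increase along directed paths (so each has length at most $k-1$), and conclude that two distinct directed paths between the same pair yield a cycle of length at most $2k-2$. The only difference is that you spell out the cycle-extraction step (finding internally disjoint subpaths $Q_1, Q_2$ and checking both run from $a$ to $b$), which the paper leaves implicit; this extra care is fine but not strictly necessary, since any cycle in $P_1 \cup P_2$ has all edges in $E(P_1)\cup E(P_2)$ and hence length at most $|E(P_1)|+|E(P_2)| \le 2k-2$.
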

    \nospace
    \begin{proof}
        Let $G$ be a graph with a $k$-colouring $f : V(G) \rightarrow \{1, \dots, k\}$. For every edge $uv \in E(G)$, we direct $uv$ as $u \rightarrow v$ if and only if $f(u) < f(v)$. Let $D$ be the resulting directed graph. 

        Suppose for a contradiction that $D$ is not a KT orientation. Since $f$ is strictly increasing along every directed path, it follows that $D$ is acyclic and every directed path in $D$ contains at most $k-1$ edges. Therefore, if there are vertices $u$ and $v$ in $D$ such that $D$ contains two distinct directed paths from $u$ to $v$, say $P_1$ and $P_2$, then $G$ contains an undirected cycle (using edges of $P_1$ and $P_2$) of length at most $2k-2$, a contradiction. 
    \end{proof}

    \begin{figure}[H]
        \centering
        \label{fig:bipartite}
        \includegraphics[]{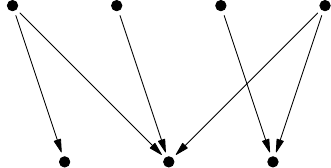}
        \caption{A bipartite graph with a KT orientation.}
    \end{figure}

    For an orientation $K$, we denote by $K^{\rm R}$ the orientation obtained by reversing the direction of each edge of $K$. Next, we observe that the property of admitting a KT orientation is closed under this operation of reversing the orientation.
    \begin{observation}\label{obs:rev}
        Let $G$ be a graph that admits a KT orientation $K$. Then $K^{\rm R}$ is also a KT orientation of $G$.
    \end{observation}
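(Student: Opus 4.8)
The plan is to observe that reversing all edges of an orientation induces a bijection between its directed paths and those of the reversed orientation, and that this bijection preserves endpoints. Concretely, if $P = p_0 \to p_1 \to \cdots \to p_\ell$ is a directed path in $K$, then reversing each edge turns $P$ into $p_\ell \to p_{\ell-1} \to \cdots \to p_0$, which uses exactly the same underlying edges of $G$ and hence is a directed path in $K^{\rm R}$; conversely every directed path in $K^{\rm R}$ arises this way from a unique directed path in $K$. This map is clearly a bijection from the set of directed paths of $K$ to the set of directed paths of $K^{\rm R}$.

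Next I would note that this bijection sends a path with ends $u$ and $v$ to a path with the same ends $u$ and $v$ (only the direction of traversal changes, from $u$-to-$v$ to $v$-to-$u$ or vice versa). Since the definition of a KT orientation counts directed paths between $u$ and $v$ ``in any direction,'' the relevant quantity is the number of directed paths whose set of ends is $\{u,v\}$, and this number is the same for $K$ and for $K^{\rm R}$. As $K$ is a KT orientation, that number is at most one for every pair $u,v$, so the same holds for $K^{\rm R}$, which is therefore a KT orientation of $G$ as well.

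There is essentially no obstacle here; the only point requiring a moment's care is to make explicit that the definition is symmetric in the direction of the path (it counts paths ``in any direction''), which is precisely what makes the edge-reversal bijection respect the KT condition. It may also be worth remarking, as a sanity check consistent with the paragraph preceding the statement, that this is compatible with acyclicity: $K$ is acyclic if and only if $K^{\rm R}$ is acyclic, since reversing edges maps directed cycles to directed cycles.
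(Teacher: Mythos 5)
Your argument is correct and is exactly the standard one; the paper states this observation without proof, treating it as immediate, and your edge-reversal bijection on directed paths (preserving unordered endpoint pairs) is precisely the justification that is being left implicit.
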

    It is easy to see that triangles do not admit a KT orientation; therefore, graphs which do admit a KT orientation are triangle-free. 

    Finally, we prove the following, which simplifies later proofs: 
    \begin{lemma} \label{lem:disjoint}
        If $D$ is a directed graph that is not a KT orientation, then there exist distinct $u, v \in V(D)$ such that there is an induced subgraph of $D$ which is the   union of two internally-disjoint directed paths between $u$ and $v$.
    \end{lemma}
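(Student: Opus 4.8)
The plan is to take a minimal witness to the failure of the KT property and argue that it is already essentially the desired configuration. Call a tuple $(x,y,Q_1,Q_2)$ a \emph{bad pair} if $x\neq y$ and $Q_1\neq Q_2$ are directed paths of $D$, each having $x$ and $y$ as its two ends (in either direction); since $D$ is not a KT orientation, a bad pair exists. Among all bad pairs, fix one, $(u,v,P_1,P_2)$, minimizing $|E(P_1)|+|E(P_2)|$. I will show that $P_1$ and $P_2$ are internally disjoint and that $P_1\cup P_2$ has no chord, so that $D[V(P_1)\cup V(P_2)] = P_1\cup P_2$ is exactly the induced subgraph asked for.

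For internal disjointness: if $P_1$ and $P_2$ shared an internal vertex $w\notin\{u,v\}$, then $w$ splits $P_1$ into a directed path $P_1'$ ending at $w$ and a directed path $P_1''$ starting at $w$, and likewise $P_2$ into $P_2'$ and $P_2''$. If the ``$\to w$'' halves of $P_1$ and $P_2$ start at the same endpoint of $\{u,v\}$, then either $P_1'\neq P_2'$ or $P_1''\neq P_2''$, and the differing pair is a bad pair with strictly fewer edges. If they start at opposite endpoints, then $P_1'$ and $P_2''$ run between $w$ and that endpoint in opposite directions, hence are distinct, and $\{P_1',P_2''\}$ is a bad pair with strictly fewer edges. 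Either way this contradicts minimality.

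For chordlessness, suppose $e=pq\in E(D)$ joins two vertices of $V(P_1)\cup V(P_2)$ but lies in neither $P_1$ nor $P_2$; I will split into cases by the relative orientations of $P_1,P_2$ and by where $p,q$ lie. If $P_1$ and $P_2$ are oppositely oriented relative to $\{u,v\}$, then $C:=P_1\cup P_2$ is a directed cycle; the two directed arcs of $C$ between $p$ and $q$ have lengths $\alpha,\beta$ with $\alpha+\beta=|E(C)|$, simplicity of $D$ forces $\alpha,\beta\ge 2$ (an arc of length $1$ together with $e$ would be a digon), and then $e$ together with the arc directed consistently with $e$ is a bad pair with fewer than $|E(C)|$ edges. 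If instead $P_1$ and $P_2$ both run from $u$ to $v$, then: (a) if $p$ and $q$ both lie on $P_1$ (or both on $P_2$), the subpath of $P_1$ between them together with $e$ forms two distinct directed paths between $p$ and $q$ whose total length is smaller, where simplicity of $D$ rules out the boundary case $\{p,q\}=\{u,v\}$ with $P_2$ a single edge; (b) if $p$ is internal to $P_1$ and $q$ internal to $P_2$ (the remaining case, since any endpoint in $\{u,v\}$ reduces to (a)), internal disjointness makes $P_1[u,p]+e$ and $P_2[u,q]$ two distinct directed $u$--$q$ paths of total length strictly less than $|E(P_1)|+|E(P_2)|$. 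In all cases we contradict minimality, so $P_1\cup P_2$ is chordless; combined with internal disjointness this gives $D[V(P_1)\cup V(P_2)] = P_1\cup P_2$, a union of two internally disjoint directed paths between $u$ and $v$, as required.

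The main obstacle is the chord analysis: one must track edge directions carefully so that each ``subpath plus chord'' really is a \emph{directed} path and a genuinely new one, and one must invoke the simplicity of $D$ in precisely the right boundary cases to upgrade ``$\le$'' to a strict inequality on edge counts.
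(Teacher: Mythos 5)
Your proof is correct and takes essentially the same approach as the paper's: choose two distinct directed paths between a pair of vertices minimizing the total number of edges, show a shared interior vertex would yield a smaller bad pair, and show a chord would yield a smaller bad pair. The paper's version is terser (it does not separate the cases by relative orientation as explicitly), but the underlying argument is identical.
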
 
    \begin{proof}  
        Since $D$ is not a KT orientation, it follows that there exist distinct vertices $u, v \in V(D)$ such that $D$ contains two distinct directed paths $P$ and $Q$ between $u$ and $v$. Let us choose $P, Q, u, v$ with this property, and subject to that, with the sum of lengths of $P$ and $Q$ as small as possible. 

        Now, if there is a vertex $z$ which occurs both in the interior of $P$ and the interior of $Q$, then $D$ contains two distinct paths either between $z$ and $u$ or between $z$ and $v$ by taking the subpaths of $P$ and $Q$ from $z$ to one of $u$ or $v$. But this contradicts the choice of $P$ and $Q$. It follows that $P$ and $Q$ are internally disjoint, as desired. 

        If one of $P$ and $Q$ is not induced, then there is an edge $xy$ such that $x, y \in V(P)$ but $xy \not\in E(P)$ and $xy \neq Q$. Now $P, xy, x, y$ is a better choice than $P, Q, u, v$. Likewise, supposing that $x'y'$ is an edge between $V(P) \setminus \{u, v\}$ and $V(Q) \setminus \{u, v\}$, we can find a better choice of paths within either the union of the subpaths of $P$ and $Q$ from $u$ to $x'$ and $u$ to $y'$, respectively, or the union of the subpaths of $P$ and $Q$ from $v$ to $x'$ and $v$ to $y'$, respectively. This concludes the proof. 
    \end{proof}

    Lemma \ref{lem:disjoint} has the following easy consequence: 
    \begin{observation} \label{obs:cc}
        Let $D, D'$ be KT orientations of graphs $G, G'$ with disjoint vertex sets. Let $x \rightarrow y$ be an edge of $D$, and let $x' \rightarrow y'$ be an edge of $D'$. Then, taking the union of $D$ and $D'$ and identifying $x$ with $x'$ as well as $y$ with $y'$ results in a KT orientation. 
    \end{observation}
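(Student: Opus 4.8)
The plan is to argue by contradiction with the help of Lemma~\ref{lem:disjoint}. Let $D''$ denote the orientation obtained from $D$ and $D'$ by taking their disjoint union and identifying $x$ with $x'$ and $y$ with $y'$; this is a simple orientation, and I regard $G$ and $G'$ as subgraphs of it. The facts I would record and then use freely are: $V(G)\cap V(G')=\{x,y\}$; $E(D'')=E(G)\cup E(G')$, and any edge of $D''$ with an endpoint outside $V(G')$ lies in $E(G)$ (and symmetrically); $D''[V(G)]=D$ and $D''[V(G')]=D'$; and, consequently, $\{x,y\}$ separates $V(G)\setminus\{x,y\}$ from $V(G')\setminus\{x,y\}$ in $D''$. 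Now assume $D''$ is not a KT orientation. By Lemma~\ref{lem:disjoint} there are distinct $u,v$ and internally-disjoint directed paths $P,Q$ between $u$ and $v$ such that $C:=P\cup Q$ is an induced subgraph of $D''$; since $u\neq v$, $C$ is an induced cycle of $D''$.

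I would then show that $C$ contains the edge $x\to y$. First, $V(C)\not\subseteq V(G)$: otherwise $C$ would be an induced subgraph of $D''[V(G)]=D$, so $P$ and $Q$ would witness that $D$ is not a KT orientation; symmetrically $V(C)\not\subseteq V(G')$. Hence $C$ has a vertex $a\in V(G)\setminus\{x,y\}$ and a vertex $b\in V(G')\setminus\{x,y\}$, which lie on opposite sides of the separator $\{x,y\}$. A cycle that meets both sides of a $2$-element separator must contain both separator vertices (deleting one of them leaves a path that still meets both sides, hence still contains a separator vertex), so $x,y\in V(C)$. As $C$ is induced and $x\to y\in E(D'')$, the edge $x\to y$ is an edge of $C$, so $x$ and $y$ are consecutive on $C$; thus $C$ consists of this edge together with a path $C^{\ast}$ from $x$ to $y$ whose internal vertices are exactly the vertices of $V(C)\setminus\{x,y\}$, and in particular $C^{\ast}$ has both $a$ and $b$ among its internal vertices.

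To conclude, $C^{\ast}$ is a single path from $x$ to $y$ having internal vertices on both sides of the separator $\{x,y\}$, while the only vertices of $C^{\ast}$ lying in $\{x,y\}$ are its two endpoints; this is impossible, since the subpath of $C^{\ast}$ joining $a$ to $b$ would have to pass through $\{x,y\}$ at an internal vertex. This contradiction shows that $D''$ is a KT orientation. I expect the one genuinely load-bearing step to be forcing the glued edge $x\to y$ onto the violating cycle $C$: this is exactly where the inducedness supplied by Lemma~\ref{lem:disjoint} and the hypothesis that the gluing is along an edge are used, and everything else is routine bookkeeping about the amalgamation.
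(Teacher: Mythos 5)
Your proof is correct, and it is exactly the argument the paper intends: the paper states Observation~\ref{obs:cc} as an ``easy consequence'' of Lemma~\ref{lem:disjoint} without writing out details, and your write-up fills in precisely that route (an induced violating cycle must cross the two-vertex cut $\{x,y\}$, hence contain both $x$ and $y$ consecutively, which is impossible). Nothing to correct.
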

    
    \section{Graphs that admit KT orientations with no linear-size independent set}
    \label{sec:small_independence}
    It was shown in \cite{colorful_ind_subg} that Zykov graphs admit KT orientations. These graphs have linear-size independent sets and unbounded fractional chromatic number \autocite{fractional_chromatic_zykov}. So a natural question to ask is if there are graphs with small independence number that admit KT orientations.

A natural first attempt towards proving this would be the following: It is well-known that by replacing each vertex $v$ by a number of non-adjacent copies of $v$ (that is, vertices with the same neighborhood as $v$) proportional to the dual solution for the fractional chromatic number linear program, we obtain a graph $G$ whose stability number equals $|V(G)|/f$, where $f$ is the fractional chromatic number of $G$. 

However, admitting a KT orientation is not closed under taking non-adjacent copies of a vertex (for example, take two non-adjacent copies of every vertex in a five-cycle), and therefore, we cannot leverage fractional chromatic number directly. 

In this section, we create a sequence of graphs \(G_1, G_2, \dots\), each of which admits a KT orientation, but their largest independent set is sub-linear and thus their fractional chromatic number is unbounded. 

\begin{definition} \label{def:f}
Let us define a sequence $F_1, F_2, \dots$ as follows. 
\begin{align*}
    F_1 &= 1 \\
 F_{k+1} &= F_k + \frac{1}{F_k} \hskip 2mm \hbox{for all $k \le 1.$} 
\end{align*}
\end{definition}

Writing $\alpha(G)$ for the size of a largest independent set in $G$, we will show that $\alpha(G_k) = |V(G_k)|/F_k$ for all $k \in \mathbb{N}$. 

We note that the same recursive expression was used in \autocite{fractional_chromatic_zykov, larsen1995fractional}, where it was shown to be the fractional chromatic number of Zykov graphs and Mycielski graphs, respectively. In particular, as noted in \cite{larsen1995fractional}, it is well known that $F_k/\sqrt{2k} \rightarrow 1$ as $k \rightarrow \infty$. 

\hypertarget{construction}{%
\subsection{Construction}\label{construction}}

Since our construction is based on the construction of twincut graphs \autocite{twincut}, we begin by describing the construction of twincut graphs. Let us denote the \(i\)-th twincut graph as \(G_i'\) and its number of vertices by \(n_k'\). In each twincut graph, some of its vertices are called \emph{branch vertices}, and we refer to the remaining vertices as \emph{inner vertices}. The set of branch vertices forms an independent set. 

Let \(G_1'\) be a one-vertex graph (a branch vertex). We will construct \(G_{k+1}'\) from \(G_k'\) (for \(k\ge 1\)), as follows (see Figure \ref{fig:twincut}). Fix an enumeration of $V(G_k')$ as $x_1, \dots, x_{n_k'}$. First, we take the original $G_k'$ and add to it a new copy of \(G_k'\) for each branch vertex $v$ of $G_k'$, making sure that the original $G_k'$ and all its copies are pairwise disjoint. Denote this graph by \(G_k'(v)\). Then we replace each branch vertex $v$ of $G_k'$ by \(n_k'\) copies $v_1, \dots, v_{n_k'}$ of $v$. Finally, for each branch vertex $v$ of $G_k'$ and every $i \in \{1, \dots, n_k'\}$, we add an edge between $v_i$ and the vertex corresponding to $x_i$ in the copy $G_k'(v)$ of $G_k'$. The resulting graph is the graph $G_{k+1}'$. The set $\{v_i: v \textnormal{ is a branch vertex of } G_k', i \in \{1, \dots, n_{k'}\}\}$ is the set of branch vertices of $G_{k+1}'$, while the remaining vertices are inner vertices.  

    \begin{figure}[H]
        \centering
        \includegraphics[]{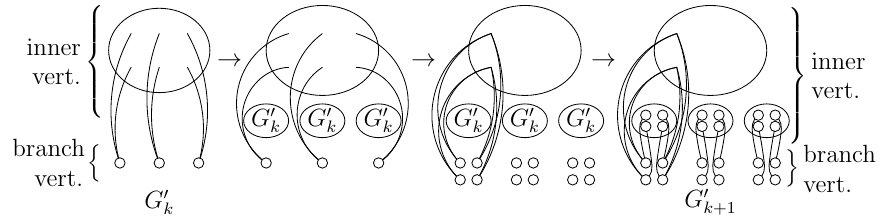}
        \caption{Construction of twincut graphs.}
        \label{fig:twincut}
    \end{figure}

Let $d = (d_1, \dots)$ be a sequence of non-negative integers. Let us now describe the construction of \emph{$d$-copycut graphs} $G_1, G_2, \dots $. Again, the vertices of each graph $G_k$ are partitioned into \emph{branch vertices} and \emph{inner vertices}. Furthermore, $G_k$ only depends on $d_1, \dots, d_{k-1}$.  

As before, \(G_1\) is a one-vertex graph (a branch vertex). For $k \geq 1$, we construct $G_{k+1}$ as follows (see Figure \ref{fig:copycut}). We denote the number of vertices of $G_k$ by  \(n_k\). Let $b_1, \dots, b_l$ be an enumeration of the branch vertices of $G_k$. We start by taking $d_k$ pairwise disjoint copies of $G_k$, say $G_k^1, \dots, G_k^{d_k}$. Then, for each $r \in \{1, \dots, l\}$, we identify the $d_k$ copies of the vertex $b_r$ in $G_k^1, \dots, G_k^{d_k}$; denote the new vertex obtained by this identification as $c_r$. Let $G_k^*$ denote the resulting graph, and let us call $c_1, \dots, c_l$ its branch vertices. All the remaining vertices are inner vertices. 

Now, we proceed as in the definition of twincut graphs. Fix an enumeration of $V(G_k)$ as $x_1, \dots, x_{n_k}$. First, we take the original $G_k$ and add to it a new copy of \(G_k\) for each branch vertex $v$ of $G_k^*$. Denote this graph by \(G_k'(v)\). Then we duplicate each branch vertex $v$ of $G_k^*$ into \(n_k\) new vertices $v_1, \dots, v_{n_k}$. Finally, for each branch vertex $v$ of $G_k^*$ and every $i \in \{1, \dots, n_k\}$, we add an edge between $v_i$ and the vertex $x_i$ in the copy $G_k(v)$ of $G_k$. The resulting graph is $G_{k+1}$. The set $\{v_i: v \textnormal{ is a branch vertex of } G_k^*, i \in \{1, \dots, n_{k}\}\}$ is the set of branch vertices of $G_{k+1}$, while the remaining vertices are inner vertices. 

Note that the construction is similar to blowing up non-branch vertices of $G_k$ into $d_i$ copies, but not exactly, and this small distinction ensures that the resulting graph admits a KT-orientation.

    \begin{figure}[H]
        \centering
        \includegraphics[]{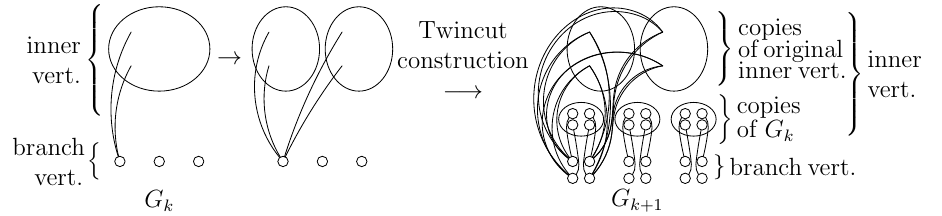}
        \caption{Construction of copycut graphs.} 
        \label{fig:copycut}
    \end{figure}

\begin{theorem}
    Let $d = (d_1, \dots)$ be a sequence of non-negative integers. For all $k \in \mathbb{N}$, the $d$-copycut graph $G_k$ admits a KT orientation. 
\end{theorem}
\begin{proof}
    We will prove by induction on $k$ that every $d$-copycut graph $G_k$ admits a KT orientation such that each branch vertex is a source (see Definition \ref{def:source}). Note that $G_1, G_2$ admit such KT orientations. 

    Let us now suppose that $G_k$ admits a KT orientation, say $D$, such that all branch vertices are sources. Then, we define an orientation on $G_k^*$ by orienting $G_k^1, \dots, G_k^{d_k}$ according to $D$, and preserving the orientation when identifying vertices. Next, for each branch vertex $v$ of $G_k^*$, we orient $G_k(v)$ according to $D$, and when replacing $v$ by copies $v_1, \dots, v_{n_k}$, we orient their incident edges according to the orientation of the corresponding edges of $v$; since by our assumption, $v$ is a source, it follows that $v_1, \dots, v_{n_k}$ are sources. Finally, when adding edges between $x_i$ and $v_i$, we orient the edge as $v_i \rightarrow x_i$, preserving that $v_i$ is a source. Let us call the resulting orientation $D'$. 

    From this construction, it is immediate that all branch vertices of $G_{k+1}$ are sources of $D'$. It remains to show that $D'$ is a KT orientation. 

    Suppose for a contradiction that this is not the case. By Lemma \ref{lem:disjoint}, we may choose distinct $s, t \in V(G_{k+1})$ such that there are at least two distinct and internally disjoint paths $P, P'$ between $s$ and $t$. Since all branch vertices are sources, it follows that $V(P) \cup V(P')$ contains at most one branch vertex, namely $s$ or $t$. If neither $s$ nor $t$ is a branch vertex of $G_{k+1}$, then $P$ and $P'$ are both contained in a copy of $G_k$ oriented according to $D$, a contradiction as $D$ is a KT orientation of $G_k$. 

    So we may assume that $S$ is a branch vertex of $G_{k+1}$. There are two possibilities for $t$: 
    \begin{itemize}
        \item If $t \in G_k^i$ for some $i \in \{1, \dots, d_k\}$, then $P$ and $P'$ are both contained in $G_k^i$, which is oriented according to $D$, a contradiction. 
        \item If $t \in G_k(v)$ for some branch vertex $v$ of $G^*$, then all vertices of $P, P'$ except for $s$ are contained in $G_k(v)$. Moreover, since each branch vertex of $G_{k+1}$ has at most one neighbour in $G_k(v)$, it follows that $s = v_i$ for some $i \in \{1, \dots, n_k\}$. But as $v_i$ has a unique neighbour in $G_k(v)$, namely $x_i$, it follows that both $P$ and $P'$ contain $x_i$, and so $x_i = t$ and $P = P'$, a contradiction.  
    \end{itemize}
    This proves that $D'$ is a KT orientation of $G_{k+1}$, as desired. 
\end{proof}

\begin{theorem}
    There exists a sequence $d = (d_1, \dots)$ such that for all $k \in \mathbb{N}$, we have that $\alpha(G_k) = |V(G_k)|/F_k$, where $F_k$ is as in Definition \ref{def:f}. 
\end{theorem}
\begin{proof}
    We define $d_1, d_2, \dots$ iteratively, starting with $d_1 = 1$. 

    Now we may assume that $d_1, \dots, d_{k-1}$ are defined, and therefore, $G_1, \dots, G_k$ are defined. We let $d_k = |V(G_k)| - \alpha(G_k)$. 

    We will prove by induction that the statement of the theorem holds, and moreover, that the set of branch vertices is a maximum independent set in each $G_k$. This is true for $k = 1$ and $k = 2$ since $F_1 = 1$ and $F_2 = 2$.  
    
    Now let us assume that $k \geq 2$, and the statement holds for $k$; we would like to show that it holds for $k+1$. Let us write $\alpha_k = \alpha(G_k)$ and recall that $n_k = |V(G_k)|$. From the induction hypothesis, it follows that $\alpha_k$ is the number of branch vertices of $G_k$. 

    We compute: 
    \begin{align*}
        n_{k+1} &= d_k(n_k - \alpha_k) + 2 n_k\alpha_k\\
        &= (n_k - \alpha_k)^2 + 2 n_k\alpha_k\\
        &= n_k^2 + \alpha_k^2. 
    \end{align*}
    The number of branch vertices of $G_{k+1}$ is $n_k\alpha_k$, and we have: 
    \begin{align*}
        n_k\alpha_k F_{k+1} &=n_k\alpha_k \cdot(F_k + 1/F_k) \\
        &= n_k\alpha_k \cdot (n_k/\alpha_k) + n_k\alpha_k \cdot (\alpha_k/n_k) \\
        &= n_k^2 + \alpha_k^2.        
    \end{align*}
    Therefore, it suffices to prove that $H_0$, the set of branch vertices of $G_{k+1}$, is a maximum independent set of $G_{k+1}$. 

    Consider an independent set $H$ in $G_{k+1}$. Partition $V(G_{k+1})$ into $d_k+l$ sets: the first $d_k$ sets are $A_j$ consisting of the non-branch vertices of $G_k^j$ for each $j$, the last $l$ are the sets $B_v$ consisting of the copies $v_i$ of a branch vertex $v$ of $G_k^*$ together with the vertices of $G_k(v)$.

    Firstly observe that $(H\cap A_j)\cup H_0$ must be independent in the copy $G_k^j$ of $G_k$, therefore $|H\cap A_j|\le\alpha(G_k)-|H_0|$. For $v\notin H_0$, $H\cap B_v$ is an independent set in $G_k(v)$ (another copy of $G_k$), so its size is at most $\alpha(G_k)$. Finally, for $v\in H_0$ we have $|H\cap B_v|\le n_k$ as $B_v$ induces a subgraph with $2n_k$ vertices containing a perfect matching. Here $l=\alpha(G_k)$. Summing all this, we have $|H|\le d_k(l-|H_0|)+(l-|H_0|)l+|H_0|n_k=n_kl$, which is exactly the number of branch vertices in $G_{k+1}$.
\end{proof}

    \section{Graphs with large girth and no KT orientation} \label{sec:girth}

In this section, we show that there are graphs with large girth and with no KT orientation. 

We adapt a method of Duffus, Ginn, and R\"odl \cite{duffus1995computational}. For this, we need the following definitions. 

\begin{definition}
    Fix $ k \in \mathbb{N}$. An \emph{oriented $k$-uniform hypergraph} $H$ consists of a finite set $V(H)$ of vertices, and a set $E(H) \subseteq V(H)^k$ of edges, each of which is an ordered $k$-tuple of distinct vertices. 
    
    A \emph{cycle} in a hypergraph is a sequence of pairwise distinct vertices and edges $v_1, e_1, v_2, e_2, \dots, v_l, e_l$ such that for all $i \in \{1, \dots, l-1\}$, the edge $e_i$ contains $v_i$ and $v_{i+1}$, and $e_l$ contains $v_l$ and $v_1$. 
    
    Given a cycle in a hypergraph, its \emph{length} is its number of vertices. The girth of a hypergraph is the length of a shortest cycle in it, or $\infty$ if no cycle exists. 
\end{definition}

\begin{theorem}[Duffus, Ginn, R\"odl \cite{duffus1995computational}, Lemma 3.3] \label{thm:dgr}
    Let $l, k \geq 2$ be integers, and let $\delta > 0$. Let $H$ be an oriented $k$-uniform hypergraph. Then there is an oriented $k$-uniform hypergraph $H'$ with vertex set $V(H) \times X$ (for some set $X$) such that: 
    \begin{itemize}
        \item $H'$ contains no cycle of length at most $l$; 
        \item Writing $X_v = \{(v, x) : x \in X\} \subseteq V(H) \times X$, we have that for every edge $e = (v_1, \dots, v_k) \in E(H)$, and for all sets $X_{v_i}' \subseteq X_{v_i}$ with $|X_{v_i}'| \geq \delta |X_{v_i}|$ for all $i \in \{1, \dots, k\}$, there exist $x_1, \dots, x_k$ with $x_i \in X_{v_i}' $ such that $(x_1, \dots, x_k) \in E(H')$. 
    \end{itemize}
    
\end{theorem}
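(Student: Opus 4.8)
The plan is to build $H'$ as a random ``blow-up'' of $H$ over a large ground set and then to delete a few edges to kill the short cycles — the alteration method. Fix a large integer $t$ (to be chosen at the very end, depending on $l,k,\delta,|V(H)|,|E(H)|$) and let $X$ be any set with $|X|=t$; identify each $X_v$ with $X$. Call a $k$-tuple $((v_1,x_1),\dots,(v_k,x_k))$ a \emph{candidate edge} if $(v_1,\dots,v_k)\in E(H)$ and $(x_1,\dots,x_k)\in X^k$; since the $v_i$ are distinct these are genuine $k$-tuples of distinct vertices of $V(H)\times X$, and every candidate edge projects to an edge of $H$. Let $H_0$ be the random oriented $k$-uniform hypergraph on $V(H)\times X$ in which each candidate edge is included independently with probability $p:=Ck\,\delta^{-k}t^{-(k-1)}$, where $C$ is an absolute constant fixed at the end.

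For the density conclusion, call a pair $\big(e,(X'_{v_i})_{i=1}^k\big)$ with $e=(v_1,\dots,v_k)\in E(H)$ and $X'_{v_i}\subseteq X$, $|X'_{v_i}|\ge\delta t$, a \emph{box}. A box contains $\prod_i|X'_{v_i}|\ge(\delta t)^k$ candidate edges, each present in $H_0$ independently with probability $p$, so the number present is binomial with mean at least $p(\delta t)^k=Ckt$. A Chernoff bound shows that the probability that \emph{some} box contains fewer than $\tfrac12 Ckt$ edges of $H_0$ is at most $|E(H)|\cdot(2^t)^k\cdot e^{-Ckt/8}$ (there are at most $2^t$ choices for each of the $k$ sets $X'_{v_i}$), which tends to $0$ as $t\to\infty$ provided $C>8\ln 2$.

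For the girth conclusion I bound the expected number of short cycles. A cycle of length $j$ uses $j$ distinct vertices $v_1,\dots,v_j$ and $j$ distinct edges $e_1,\dots,e_j$ with $e_i$ containing $v_i$ and $v_{i+1}$; there are at most $(|V(H)|t)^j$ choices of the vertex sequence, and for each consecutive pair at most $|E(H)|\,t^{k-2}$ candidate edges contain both of them (choose the underlying edge of $H$, whose distinct entries pin down the two relevant coordinates, then fill the remaining $k-2$ coordinates freely). Hence the expected number of cycles of length $j$ is at most $\lambda^j$, where $\lambda:=|V(H)|\,|E(H)|\,t^{k-1}p=Ck\,|V(H)|\,|E(H)|\,\delta^{-k}$ — crucially \emph{independent of $t$}. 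So the expected total number $D$ of cycles of length at most $l$ is at most $l\lambda^l$, and by Markov's inequality $\Pr[D\ge\tfrac14 Ckt]\le 4l\lambda^l/(Ckt)\to 0$ as $t\to\infty$. This contrast is the heart of the matter, and is where I expect the main obstacle to lie: a naive ``pick $p$, then delete'' argument fails because keeping boxes nonempty after deletion wants $p$ large while killing short cycles wants $p$ small; the resolution is to set $p\propto t^{-(k-1)}$, which freezes $\lambda$ (hence the short-cycle count) while letting the per-box edge count $Ckt$ grow linearly in $t$, so that taking $t$ large tips both estimates in our favour.

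Finally, fix $C=16$ and choose $t$ large enough that both failure probabilities above are below $\tfrac12$. Then with positive probability $H_0$ simultaneously has more than $\tfrac12 Ckt$ edges in every box and fewer than $\tfrac14 Ckt$ cycles of length at most $l$ in total; fix such an outcome and let $H'$ be obtained by deleting one edge from each cycle of length at most $l$. Deletions cannot create cycles, so $H'$ has no cycle of length at most $l$; and fewer than $\tfrac14 Ckt$ edges are removed in total, so each box loses fewer than $\tfrac14 Ckt$ of its more than $\tfrac12 Ckt$ edges and thus still contains an edge of $H'$, which (since every surviving edge still projects to an edge of $H$) yields the density conclusion. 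Everything except the $p$-small-versus-$p$-large balancing act is a routine first- and second-moment computation.
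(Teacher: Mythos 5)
Your argument is correct. Note that the paper does not prove this statement at all --- it is quoted as Lemma 3.3 of Duffus, Ginn and R\"odl and used as a black box --- so there is no in-paper proof to compare against; what you have written is a valid self-contained proof by the standard deletion method, and it is in the same probabilistic spirit as the original source. The two computations that matter both check out: the per-box edge count is binomial with mean at least $p(\delta t)^k = Ckt$, so the Chernoff-plus-union-bound $|E(H)|\,2^{kt}e^{-Ckt/8}$ indeed vanishes once $C>8\ln 2$ (and each candidate edge determines its underlying edge of $H$ by projection, so there is no double counting and the summands really are independent); and your bound of $|E(H)|\,t^{k-2}$ on the number of candidate edges through a fixed pair of distinct vertices of $V(H)\times X$ is right, which makes $\lambda = |V(H)|\,|E(H)|\,t^{k-1}p$ independent of $t$ exactly as you say. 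Only cosmetic points remain: you should note that $p\le 1$ once $t$ is large, that the candidate edges are ordered tuples of \emph{distinct} vertices because the $v_i$ are distinct (so they are legitimate edges of an oriented $k$-uniform hypergraph), and that in the cycle count you only sum over edge sequences with pairwise distinct edges, for which the probability of joint presence is exactly $p^{j}$.
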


\begin{theorem} \label{thm:girth}
    Let $k \in \mathbb{N}$. Then there is a graph with girth at least $k$ which does not admit a KT orientation. 
\end{theorem}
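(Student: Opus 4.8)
The plan is to apply Theorem~\ref{thm:dgr} with a cleverly chosen small oriented hypergraph $H$ whose ``robustness'' property forces any orientation of the blown-up graph to contain two internally-disjoint directed paths with the same ends. First I would fix a finite graph $G_0$ that does not admit a KT orientation (for instance $C_3$, or $C_5$, which is triangle-free), and think of its edges as the building blocks. The idea is to let each vertex of $H$ correspond to an edge-slot of $G_0$, so that an edge of $H$ encodes one copy of $G_0$; then for every edge $e=(v_1,\dots,v_k)\in E(H)$ we glue in a copy of $G_0$ along the vertex-classes $X_{v_1},\dots,X_{v_k}$. Concretely, the vertex set of the final graph $\Gamma$ will be (a subset of) $V(H')\times(\text{something})$, where $H'$ is the high-girth hypergraph produced by Theorem~\ref{thm:dgr}; the large girth of $H'$ will translate, via the usual Duffus--Ginn--R\"odl argument, into large girth of $\Gamma$, because any short cycle in $\Gamma$ would have to be ``supported'' on a short cycle of $H'$ (a single edge of $H'$, i.e.\ a single copy of $G_0$, contributes no cycle shorter than the girth of $G_0$, which we can take $\ge k$ by choosing $G_0$ to have girth $\ge k$ — so actually I would take $G_0$ itself to be a non-KT graph of girth $\ge k$, if one exists; otherwise this is exactly the point where the hypergraph machinery is needed).

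The key step is the contrapositive colouring-type argument. Suppose $\Gamma$ admits a KT orientation $D$. Restricted to each vertex-class $X_v$, this orientation, together with Lemma~\ref{lem:disjoint} (no induced union of two internally-disjoint directed $u$--$v$ paths), imposes a constraint; the aim is to extract from $D$ a bounded amount of ``local data'' at each $X_v$ — e.g.\ for each $v$ a partition of $X_v$ into finitely many parts according to the orientation pattern — such that, by pigeonhole, for each $v$ some part $X_v'\subseteq X_v$ with $|X_v'|\ge\delta|X_v|$ is ``monochromatic'' in this local data. Then I would invoke the second bullet of Theorem~\ref{thm:dgr}: for every edge $e=(v_1,\dots,v_k)\in E(H)$ there are $x_i\in X_{v_i}'$ with $(x_1,\dots,x_k)\in E(H')$, meaning the corresponding copy of $G_0$ sits on vertices all carrying the same local data. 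The final move is to show that this forces a copy of $G_0$ whose induced orientation in $D$ is a KT orientation of $G_0$ — contradicting the choice of $G_0$. So the crux is choosing the right finite invariant of the orientation at a vertex-class so that ``all classes monochromatic'' $\Rightarrow$ ``some copy of $G_0$ is KT-oriented''.

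The main obstacle is precisely the design of that finite local invariant and the verification that monochromaticity propagates correctly: an orientation of a single edge of $G_0$ has only two states, but what matters is a global consistency condition across the copy, and a priori two vertices in the same class $X_v$ might interact with different other classes in incompatible ways. I expect to handle this by making the gluing ``vertex-transparent'': arrange that each copy of $G_0$ attaches to the classes $X_{v_i}$ only at single vertices (using Observation~\ref{obs:cc}-style identifications along edges), so that the relevant invariant at $X_v$ is just, say, the orientation of one distinguished incident edge, giving two colours per class. Then a short $2$-colouring argument on $V(G_0)$ shows some $\delta$-fraction choice yields a KT-oriented $G_0$. I would also need a short lemma that a cycle in $\Gamma$ either lies inside one copy of $G_0$ (hence has length $\ge\mathrm{girth}(G_0)\ge k$) or projects to a cycle in $H'$ of length $\ge l$, and choose $l$ large enough (depending on $k$ and $|V(G_0)|$) to conclude $\mathrm{girth}(\Gamma)\ge k$.
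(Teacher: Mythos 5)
Your plan has a self-defeating core: you propose to glue in, for every hyperedge of $H'$, a full copy of a fixed graph $G_0$ that does not admit a KT orientation. But then $\Gamma$ contains $G_0$ as a subgraph, so $\mathrm{girth}(\Gamma)\le\mathrm{girth}(G_0)$, and no non-KT-orientable graph of girth $\ge k$ is available as a seed --- that is precisely the statement being proved. (Your fallback $C_5$ does not help: $C_5$ admits a KT orientation, e.g.\ with two sources and two sinks; among cycles only $C_3$ fails.) Conversely, if you weaken the gluing so that no full copy of $G_0$ appears, the contradiction you aim for evaporates, because the restriction of a KT orientation to \emph{any} subgraph is automatically a KT orientation of that subgraph; there is no extra content in ``some copy of $G_0$ is KT-oriented.'' So the density/pigeonhole machinery, as you have set it up, has nothing to bite on: the obstruction cannot be a single bounded gadget, and your proposed local invariant (the orientation of one distinguished edge per class) carries no information that could produce a contradiction.

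The missing idea is that the gadget must itself be KT-orientable, with the obstruction arising only from \emph{how its vertices are ordered} in a topological ordering of the putative KT orientation. The paper takes $H$ to be the complete oriented $k$-uniform hypergraph on $k$ vertices (all $k!$ orderings as hyperedges), applies Theorem~\ref{thm:dgr} with $l=k$, $\delta=1/k$, and for each hyperedge $(x_1,\dots,x_k)$ of $H'$ adds only a $k$-cycle $x_1x_2\cdots x_kx_1$; this keeps the girth at $k$. If $G$ had a KT orientation $D$, then $D$ is acyclic and has a topological ordering; a greedy pigeonhole argument along that ordering produces a permutation $\pi$ and subsets $X'_{\pi(1)},\dots,X'_{\pi(k)}$, each of density $\ge 1/k$ in its class, occurring consecutively in the ordering. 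The second bullet of Theorem~\ref{thm:dgr}, applied to the hyperedge $(v_{\pi(1)},\dots,v_{\pi(k)})$ of $H$, then yields a hyperedge $(x_1,\dots,x_k)\in E(H')$ with the $x_i$ appearing in order, so the corresponding $k$-cycle contains the two distinct directed paths $x_1\to x_2\to\cdots\to x_k$ and $x_1\to x_k$, a contradiction. In short: the ``finite local invariant'' you were searching for is the position in a global topological order, not an orientation pattern, and the completeness of $H$ is what lets the argument work for every possible ordering.
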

\begin{proof}
Let $H$ be the following oriented $k$-uniform hypergraph: 
\begin{itemize}
    \item $V(H) = \{v_1, \dots, v_k\}$; 
    \item $E(H) = \{(v_{\pi(1)}, \dots, v_{\pi(k)}): \pi \in S_k\}$, where $S_k$ is the set of all permutations of $\{1, \dots, k\}$. 
\end{itemize}
In other words, $H$ is the complete oriented $k$-uniform hypergraph: every possible edge is present. 

Let $H'$ and $X$ be as promised by Theorem \ref{thm:dgr} applied to $H$ with $l=k$ and $\delta = 1/k$. 

Let us define $G$ as follows. We have $V(G) = V(H') = V(H) \times X$. For every edge $(x_1, \dots, x_k)$ of $H'$,  we add the following edges to $G$: 
\begin{itemize}
    \item $x_jx_{j+1}$ for all $j \in \{1, \dots, k-1\}$; and 
    \item $x_1x_k$. 
\end{itemize}

We claim that $G$ has girth $k$. (This follows from arguments in \cite{duffus1995computational}, but was not stated as a theorem.) Suppose not; and let $C$ be a cycle in $G$ of length less than $k$. Then not all edges of $C$ were added due to the same edge $e$ of $H'$. But now we can use $C$ to obtain a cycle of length at most $k$ in $H'$, a contradiction. 

Now suppose for a contradiction that $G$ admits a KT orientation $D$. Since every KT orientation is acyclic, it follows that there is a topological ordering $q_1, \dots, q_m$ of $V(G)$ such that for every edge $q_iq_j$ of $G$ with $i < j$, we have $q_i \rightarrow q_j$ in $D$. 

For $i \in \{1, \dots, k\}$, we let $X_i = \{v_i\} \times X = \{(v_i, x) : x \in X\}$. 

The following is again implicit in \cite{duffus1995computational}, where it is shown that in every ordering of the vertices of $H'$, there is some edge which appears in order.

We define a permutation $\pi$ and sets $X_i'$ for $i \in \{0, \dots, k-1\}$ by defining $\pi(1), \pi(2), \dots$ iteratively as follows: We define $j_0 = 0$. We assume that $\pi(1), \dots, \pi(i) \in \{1, \dots, k\}$ are defined and distinct. We also assume that $j_{i}$ is defined, and that for each $l \in \{1, \dots, k\} \setminus \{\pi(1), \dots, \pi(i)\}$, we have $|\{q_{j_i+1}, \dots, q_m\} \cap X_l| \geq (k-i)|X|/k.$ This conditions holds for $i = 0$ since $|X_l| = |X|$ for all $l \in \{1, \dots, k\}$. 

Now, let $i \in \{0, \dots, k-1\}$ and let $S_i = \{\pi(1), \dots, \pi(i)\}$. Let $j_{i+1}$ be minimum such that: 
\begin{itemize}
    \item $j_{i+1} \geq j_i + 1$; and
    \item there exists $l \in \{1, \dots, k\} \setminus S_i$ such that $|X_l \cap \{q_{j_i+1}, \dots, q_{j_{i+1}}\}| \geq |X|/k$. 
\end{itemize}
Note that $j_{i+1}$ is well-defined since $|S_i| = i$, and so $\{1 \dots, k\} \setminus S_i \neq \emptyset$, and $|X_l \cap \{q_{j_i+1}, \dots, q_m\}| \geq (k-i)|X|/k \geq (k-(k-1))|X|/k \geq |X|/k$. 

Letting $l$ be as in the definition of $j_{i+1}$, we set $\pi(i+1) = l$. We let $X_{l}' = |X_l \cap \{q_{j_i+1}, \dots, q_{j_{i+1}}\}|$. Then, since $\pi(i+1) \not\in S_i$, it follows that $\pi(1), \dots, \pi(i+1) \in \{1, \dots, k\}$ are defined and distinct. Moreover, $j_{i+1}$ is defined. In addition, for each $l' \in \{1, \dots, k\} \setminus (S_i \cup \{l\})$, we have that $|X_{l'} \cap \{q_{j_i+1}, \dots, q_{j_{i+1}}\}| \leq |X|/k$, and so 
\begin{align*}
    |\{q_{j_{i+1}+1}, \dots, q_m\} \cap X_{l'}| &=  |\{q_{j_i+1}, \dots, q_m\} \cap X_{l'}| - |X_{l'} \cap \{q_{j_i+1}, \dots, q_{j_{i+1}}\}|\\
    &\geq (k-i)|X|/k - |X|/k = (k-i-1)|X|/k,
\end{align*}
as desired. 

We have defined sets $X_i'$ for all $i \in \{1, \dots, k\}$ with the following properties: 
\begin{itemize}
    \item For all $i \in \{1, \dots, k\}$, $X_i' \subseteq X_i$ and $|X_i'| \geq |X|/k = |X_i|/k$;
    \item If $q_r \in X'_{\pi(i)}$ and $q_s \in X'_{\pi(j)}$ with $i < j$, then $r < s$. 
\end{itemize}
In other words, in our topological ordering of $D$, the sets $X'_{\pi(1)}, X'_{\pi(2)}, \dots, X'_{\pi(k)}$ appear in this order. 

Now, applying Theorem \ref{thm:dgr}, and using that $(v_{\pi(1)}, \dots, v_{\pi(k)}) \in E(H)$, it follows that there is an edge $(x_1, \dots, x_k) \in E(H')$ such that $x_i \in X'_{\pi(i)}$ for all $i \in \{1, \dots, k\}$. But then, in the topological ordering of $D$, the vertices $x_1, \dots, x_k$ appear in this order; and so, from our construction of $G$ and the definition of a topological ordering, we obtain that $x_1 \rightarrow x_k$ and $x_1 \rightarrow x_2 \rightarrow \dots \rightarrow x_k$ are two distinct directed paths from $x_1$ to $x_k$ in $D$, a contradiction. This proves that $D$ is not a KT orientation, which concludes the proof. 
\end{proof}

A similar argument to the proof of Theorem \ref{thm:girth} establishes the stronger statement that for every vertex-ordered graph $J$, we can construct a graph $G$ with the same girth as $J$ such that every vertex-ordering of $G$ contains an ordered induced copy of $J$.

    \section{NP-Completeness Results}
\label{sec:np}

In this section, we show that it is NP-complete to decide if a given graph admits a KT orientation. We first give a reduction for general graphs, and then show how to adapt this construction to certain restricted graph classes. 

\begin{lemma} \label{lem:ktalg}
    There is a polynomial-time algorithm which decides, for a given digraph $D$, whether $D$ is a KT orientation or not. 
\end{lemma}
\begin{proof}
    We first check whether $D$ is acyclic, and output that $D$ is not a KT orientation if $D$ contains a directed cycle. Now we may assume that $D$ is acyclic; and we may compute the number of directed paths between every pair of vertices (for example, via dynamic programming using a topological ordering). 
\end{proof}
In particular, it follows that the problem of deciding whether a given graph admits a KT orientation is in NP. 

\subsection{NP-Hardness in Arbitrary Graphs}

    In this subsection, we will prove Theorem \ref{thm:nphard1} (below) using a reduction from the monotone not-all-equal 3-SAT problem \cite{nae3sat} (also known as the 2-coloring of 3-uniform hypergraphs problem):
    \begin{definition}
        \emph{Monotone not-all-equal 3-SAT} is the following problem:
        \begin{itemize}
            \item Input: Variables $x_1, \dots, x_n$ and clauses $C_1, \dots, C_m$; each clause consists of exactly three variables (no negations). 
            \item Output: Yes if there is a truth assignment such that each variable $x_1, \dots, x_n$ is set to either True or False, with the property that each clause contains at least one variable which is set to True and one variable which is set to False; and No otherwise. 
        \end{itemize}
    \end{definition}

    \begin{theorem}[\cite{nae3sat}]
        The monotone not-all-equal 3-SAT problem is NP-complete. 
    \end{theorem}

    We will use this to show: 

    \begin{theorem} \label{thm:nphard1}
        Deciding whether a given graph admits a KT orientation is NP-hard.
    \end{theorem}

    Before proving Theorem \ref{thm:nphard1}, we start with some definitions and lemmas. 

    \begin{definition} \label{def:kladder}
        Let $k \in \mathbb{N}$. A \emph{$k$-ladder} is a graph with vertex set $\{v_1, \dots, v_k, w_1, \dots, w_k\}$ and edge set $$\{v_iw_i : i \in \{1, \dots, k\}\} \cup \{v_iv_{i+1} : i \in \{1, \dots, k-1\}\} \cup  \{w_iw_{i+1} : i \in \{1, \dots, k-1\}\}.$$ In other words, the $k$-ladder graph is the $2 \times k$ grid.
    \end{definition}
    
  \begin{figure}[H]
        \centering
        \includegraphics[]{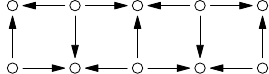}
        \label{fig:copytape}
        \caption{A KT orientation of a $5$-ladder.}
    \end{figure}

    \begin{definition}\label{def:source}
         A \emph{source} in a digraph is a vertex with no in-neighbours; a \emph{sink} in a digraph is a vertex with no out-neighbours. 
    \end{definition}   
    \begin{lemma}\label{lem:chain}
        There is exactly one KT orientation $D$ of a $k$-ladder such that $v_1 \rightarrow w_1$ in $D$. Furthermore, the orientation $D$ has the property that for $i \in \{1, \dots, k\}$, we have $v_i \rightarrow w_i$ in $D$ if and only if $i$ is odd. Also, each vertex of $D$ is a source or a sink. 
    \end{lemma}
    \begin{proof}
        We prove this by induction on $k$. The statement holds for $k=1$, as $v_1w_1$ is the only edge of a $1$-ladder. 
        
        Now, we first prove that every $k$-ladder has an orientation as desired, by taking the bipartition with $A = \{v_i : i \textnormal{ odd}\} \cup \{w_i : i \textnormal{ even}\}$ and $B = \{v_i : i \textnormal{ even}\} \cup \{w_i : i \textnormal{ odd}\}$. Then, as in Observation \ref{obs:bip}, we may obtain a KT orientation with the desired properties by directing every edge from $A$ to $B$. 
        
        It is easy to verify that there are exactly two KT orientations of a four-cycle, shown in Figure \ref{fig:4cycle}. 
         \begin{figure}[H]
        \centering
        \includegraphics[]{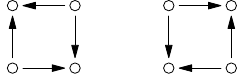}
        \caption{Possible orientations of a four-cycle.}
        \label{fig:4cycle}
    \end{figure}
     Now consider a KT orientation $D$ of a $k$-ladder such that $v_1 \rightarrow w_1$. Then, by induction, we may assume that $v_{k-1} \rightarrow w_{k-1}$ if $k-1$ is odd, and $w_{k-1} \rightarrow v_{k-1}$ if $k-1$ is even. Since $v_{k-1}$-$v_k$-$w_k$-$w_{k-1}$-$v_{k-1}$ is a four-cycle, and since there are only two KT orientations of a four-cycle as shown in Figure \ref{fig:4cycle}, it follows that $v_k \rightarrow w_k$ if and only if $k$ is odd. Note that this determines the orientations of all edges of the form $v_iv_{i+1}$ and $w_iw_{i+1}$.
    \end{proof}

    We use the ladder to propagate information; more precisely, if we have two edges $e = vw$ and $f = v'w'$ in a graph, we may create a $k$-ladder with $k$ odd and identify $v$ with $v_1$; $w$ with $w_1$; $v'$ with $v_k$; and $w'$ with $w_k$ to enforce that in every KT orientation, either both $v \rightarrow w$ and $v' \rightarrow w'$, or neither. 

    Let us now show how to create a clause gadget. 
    \begin{lemma} \label{lem:clause}
        Let $H$ be a five-cycle with vertices $v_1, \dots, v_5$ and edges $\{v_1v_2, v_2v_3, v_3v_4, v_4v_5, v_5v_1\}$. Then there is no KT orientation of $H$ such that $v_1 \rightarrow v_2$, and $v_2 \rightarrow v_3$, and $v_5 \rightarrow v_4$. Likewise, there is no KT orientation such that $v_2 \rightarrow v_1$, and $v_3 \rightarrow v_2$, and $v_4 \rightarrow v_5$. However, every other choice of orientation of the edges $v_1v_2, v_2v_3, v_4v_5$ extends to a KT orientation of $H$. 
    \end{lemma}
    \begin{proof}
        Let $D$ be an orientation of $H$. Firstly, observe that an orientation of a cycle is KT if and only if it has at least 4 vertices that are sources or sinks. By Observation \ref{obs:rev}, we may (by versing all edges if necessary) assume that $v_1 \rightarrow v_2$ in $D$. Now assume that the vertex $v_2$ is neither a source nor a sink, that is $v_2 \rightarrow v_3$. Therefore all the remaining vertices have to be either sources or sinks in the KT orientation of $H$. However if $v_5 \rightarrow v_4$, then one of $v_5$ and $v_1$ is also not a sink, and the first claim of the lemma follows. 

        For the second claim, we again assume that $v_1 \rightarrow v_2$; we extend the orientations as follows: 
        \begin{itemize}
            \item $v_2 \rightarrow v_3$ and $v_4 \rightarrow v_5$: Orient the remaining edges as $v_4 \rightarrow v_3$ and $v_1 \rightarrow v_5$. 
            \item $v_3 \rightarrow v_2$ and $v_4 \rightarrow v_5$: Orient the remaining edges as $v_1 \rightarrow v_5$ and $v_3 \rightarrow v_4$. 
            \item $v_3 \rightarrow v_2$ and $v_5 \rightarrow v_4$: Orient the remaining edges as $v_1 \rightarrow v_5$ and $v_3 \rightarrow v_4$.
        \end{itemize}
        Each of these is a KT orientation. \end{proof}
    
    The following lemma makes it easier to show that a given orientation is a KT orientation.  

    \begin{lemma} \label{lem:sourcesink}
        Let $D$ be a digraph, and let $u$ be a source or a sink in $D$. Suppose further that every neighbour of $u$ in $D$ is a source or a sink in $D$. Then $D$ is a KT orientation if and only if $D \setminus u$ is a KT orientation. 
    \end{lemma}

    \begin{proof}
        Since KT orientations are monotone under taking subgraphs, it follows that if $D$ is a KT orientation, then so is $D \setminus u$. 
        
        It remains to show the converse. By symmetry (using Observation \ref{obs:rev}), we may assume that $u$ is a source, and consequently, all neighbours of $u$ are sinks. Let $P$ be a directed path of $D$ which contains $u$. Then, since $u$ is a source, it follows that $u$ is the first vertex of $P$. Moreover, since every neighbour of $u$ is a sink, it follows that $P$ contains at most two vertices. 

        Now suppose that $D$ is not a KT orientation, and let $Q, Q'$ be two distinct directed paths between vertices $x, y \in V(D)$ which certify that $D$ is not a KT orientation. If $u$ is not a vertex of $Q, Q'$, then $D \setminus u$ is not a KT orientation as $Q, Q'$ are paths in $D \setminus u$. Therefore, we may assume that $x = u$. By the previous paragraph, it follows that $Q, Q'$ each contain at most two vertices, and therefore exactly two vertices, namely $x$ and $y$. But now $Q = Q'$, a contradiction. This shows that if $D$ is not a KT orientation, then $D \setminus u$ is not a KT orientation, finishing the proof. 
    \end{proof}

    \begin{proof}[Proof of Theorem \ref{thm:nphard1}]
        Let $x_1, \dots, x_n$ and $C_1, \dots, C_m$ be an instance of monotone not-all-equal 3-SAT. For each variable $x_i$, we start by taking two distinct vertices $y_i, z_i$ and add an edge $y_iz_i$. For each clause $C_j$, we add a copy of a five-cycle with vertex set $\{v_1^j, v_2^j, v_3^j, v_4^j, v_5^j\}$ and edge set $\{v_1^jv_2^j, v_2^jv_3^j, v_3^jv_4^j, v_4^jv_5^j, v_1^jv_5^j\}$. We also define $i^j_1, i^j_2, i^j_3$ such that $C_j$ contains variables $x_{i^j_1},x_{i^j_2},x_{i^j_3}$. 

        Then, for each clause $C_j = x_{i^j_1} \lor x_{i^j_2} \lor x_{i^j_3}$ and for each $l \in \{1, 2, 3\}$, we add a new 3-ladder with vertices $\{v_1,v_2,v_3,w_1,w_2,w_3\}$ and identify some of its vertices with the existing vertices as follows: 
            \begin{itemize}
                \item $v_1$ is identified with $y_{i_l^j}$; 
                \item $w_1$ is identified with $z_{i_l^j}$
                \item $v_3$ is identified with 
                $v_1^j$ if $l=1$, with $v_2^j$ if $l = 2$, and with $v_5^j$ if $l = 3$; 
                \item $w_3$ is identified with 
                $v_2^j$ if $l=1$, with $v_3^j$ if $l = 2$, and with $v_4^j$ if $l = 3$.  
            \end{itemize}

    Let us call the resulting graph $G$. Suppose first that $G$ admits a KT orientation $D$. Let us define a truth assignment as follows: For $i \in \{1, \dots, n\}$, if $y_i \rightarrow z_i$ in $D$, we set $x_i$ to True; otherwise, we set $x_i$ to False. Suppose that there is a clause $C_j$ with variables $x_{i_1^j}, x_{i_2^j}, x_{i_3^j}$ which are all set to True (the case in which they are all set to False is analogous). Then, using the 3-ladder gadgets and Lemma \ref{lem:chain}, we conclude that in $D$, we have $v_1^j \rightarrow v_2^j$ and $v_2^j \rightarrow v_3^j$, and $v_5^j \rightarrow v_4^j$. But now, by Lemma \ref{lem:clause}, the subgraph of $D$ induced by $\{v_1^j, \dots, v_5^j\}$ is not a KT orientation, a contradiction. This proves that if $G$ admits a KT orientation, then our instance of monotone not-all-equal 3-SAT is satisfiable. 

    It remains to show the converse, that is, we assume that a truth assignment of $x_1, \dots, x_n$ satisfying the monotone not-all-equal 3-SAT instance is given, and we aim to produce a KT orientation of $G$. 

    For $i \in \{1, \dots, n\}$, we direct $y_i \rightarrow z_i$ if $x_i$ is True, and $z_i \rightarrow y_i$ if $x_i$ is False. For each 3-ladder gadget whose edge $v_1w_1$ was identified with $y_iz_i$, we choose the unique orientation given by Lemma \ref{lem:chain} which extends the orientation of the edge $y_iz_i$ we chose. 

    For all $j \in \{1, \dots, m\}$, it follows that the edges $v_1^jv_2^j, v_2^jv_3^j$ and $v_4^jv_5^j$ have received an orientation already (via the 3-ladder gadgets) and, since not all variables in $C_j$ are assigned True, and not all are assigned False, it follows that this orientation can be extended to $v_3^jv_4^j$ and $v_1^jv_5^j$ using Lemma \ref{lem:clause} in such a way that the restriction of our orientation to $\{v_1^j, \dots, v_5^j\}$ is a KT orientation. 
    
    Let us denote the resulting orientation of $G$ as $D$. From the orientations of the 3-ladder gadgets as in Lemma \ref{lem:chain}, we conclude that for all $i \in \{1, \dots, n\}$, the vertices $y_i, z_i$ and all their neighbours are sources or sinks. Therefore, by Lemma \ref{lem:sourcesink}, it suffices to verify that $D' = D \setminus \{y_i, z_i : i \in \{1, \dots, n\}\}$ is a KT orientation.

    Each component $D''$ of $D'$ consists of 11 vertices, namely, for some $j \in \{1, \dots, m\}$, the vertices $v_1^j, \dots, v_5^j$, as well as two vertices from each of the three 3-ladder gadgets associated with $C_j$; all six of these vertices are sources or sinks in $D$ and hence in $D''$. Now, the restriction $D$ to the five-cycle on $v_1^j, \dots, v_5^j$ is a KT orientation, and the restriction of $D$ to each of its 3-ladder gadgets is a KT orientation. But $D''$ is obtained by identifying three edges of the five-cycle on $v_1^j, \dots, v_5^j$ with four-cycles contained in ladder gadgets; so by Observation \ref{obs:cc}, $D''$ is a KT orientation.
    \end{proof} 

We briefly mention the following: 
\begin{theorem}\label{thm:npharddeg4}
    Deciding if a given graph of maximum degree at most four admits a KT orientation is NP-complete. 
\end{theorem}
The proof is similar to the proof of Theorem \ref{thm:nphard1}, except that rather than identifying the edge $y_iz_i$ with several edges in different 3-ladder gadgets, we replace the edge $y_iz_i$ with a $k$-ladder for some sufficiently large $k$, and for each 3-ladder gadget that had one of its edges identified with $y_iz_i$, we instead identify the edge of the 3-ladder gadget with a different edge $v_jw_j$ of this $k$-ladder. All vertices in this construction have degree at most 3, except vertices $v_2^j$ in clause gadgets, which have degree 4. We omit the details.

\subsection{Planar Graphs}

The main result of this subsection is the following: 
\begin{theorem} \label{thm:npplanar}
    Deciding if a given planar graph admits a KT orientation is NP-complete. 
\end{theorem}

We prove this by introducing a new gadget, the crossing gadget shown in Figures \ref{fig:np-planar1} and \ref{fig:np-planar2}.
    \begin{figure}[p]
        \centering
        \includegraphics[]{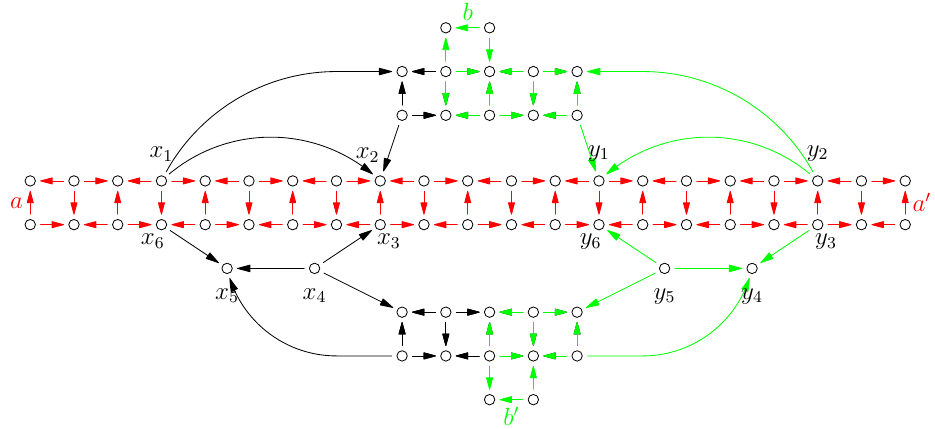}
        \caption{Crossing gadget.}
        \label{fig:np-planar1}
    \end{figure}
    \begin{figure}[p]
        \centering
        \includegraphics[]{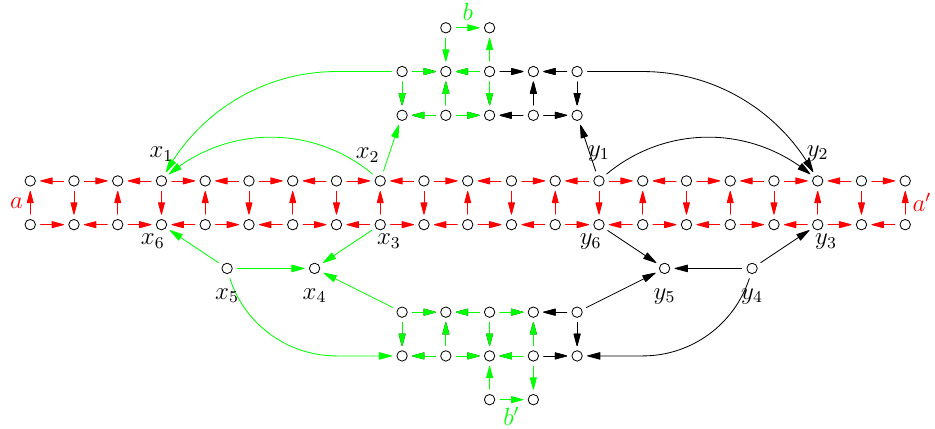}
        \caption{Another orientation of the crossing gadget.}
        \label{fig:np-planar2}
    \end{figure}
\begin{lemma}
Every KT orientation of the graph in Figures \ref{fig:np-planar1} and \ref{fig:np-planar2} satisfies that either both $a$ and $a'$ are oriented from bottom to top or vice versa, and likewise either both $b$ and $b'$ are oriented from right to left or vice versa. Furthermore, every orientation of the edges $a, a', b, b'$ such that both $a$ and $a'$ are oriented in the same direction, and both $b$ and $b'$ are oriented in the same direction, extends to a KT orientation of the gadget. 
\end{lemma}
\begin{proof}
    Note that $a$ and $a'$ will always have the same orientation because there is a ladder gadget between them consisting of an even number of four-cycles. Now we will prove that $b$ and $b'$ also have the same orientation. 
    \begin{itemize}
        \item 
            If $a$ is oriented from bottom to top and $b$ is oriented from right to left (or both are reversed) as in Figure \ref{fig:np-planar1}, then there is a directed path $y_3y_2y_1y_6$ (or directed in the opposite direction). So there is only one possible way to orient the rest of the six-cycle $y_1y_2y_3y_4y_5y_6$ as a KT orientation, namely orienting the remaining edges as $y_3 \rightarrow y_4$, $y_5 \rightarrow y_4$, and $y_5 \rightarrow y_6$. Since there is a series of four-cycles sharing edges from $y_4y_5$, this determines the orientation of $b'$ and implies that $b'$ is oriented from right to left. 
        \item 
            If $a$ is oriented from bottom to top and $b$ is oriented from left to right (or both are reversed) as in Figure \ref{fig:np-planar2}, we use the same observation as above for the six-cycle $x_1x_2x_3x_4x_5x_6$. 
    \end{itemize}
The second claim of the lemma follows from Figures \ref{fig:np-planar1} and \ref{fig:np-planar2} (possibly by reversing the orientations). 
\end{proof}

\begin{proof}[Proof of Theorem \ref{thm:npplanar}.]
    We describe how to modify the proof of Theorem \ref{thm:nphard1} using the crossing gadget. First, as in Theorem \ref{thm:npharddeg4}, we replace each edge $y_iz_i$ in the construction of Theorem \ref{thm:nphard1} with a $k$-ladder gadget, where $k$ is at least the number of occurrences of the variable $x_i$ in the monotone not-all-equal 3-SAT instance. For each 3-ladder gadget such that one of its edges was previous identified with $y_iz_i$, we instead identify it with an edge $v_jv_{j+1}$ of the $k$-ladder. Note that the resulting graph is planar; see Figure \ref{fig:ladder}.
    \begin{figure}
        \centering
        \includegraphics[width=0.6\textwidth]{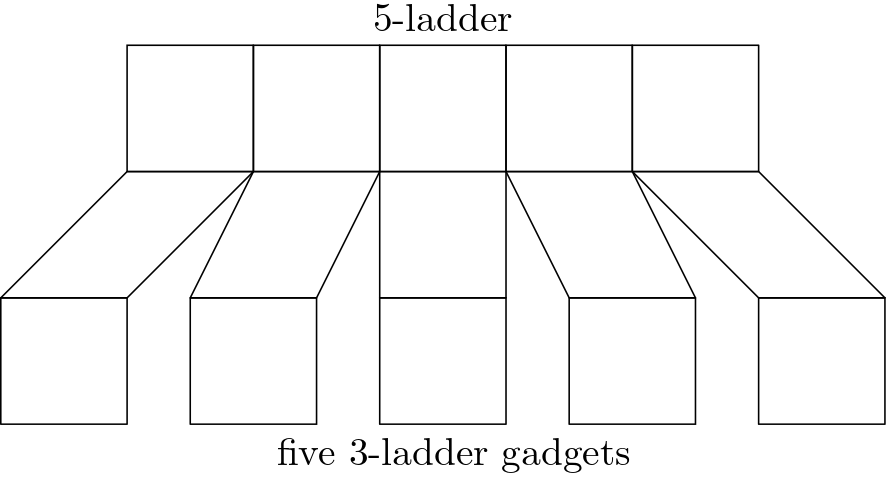}
        \caption{Attaching $3$-ladder gadgets to variable gadgets in Theorem \ref{thm:npplanar}.}
        \label{fig:ladder}
    \end{figure}
    
    The purpose of each 3-ladder gadget tape was to ``copy'' the orientation of an edge to another edge, but a longer ladder gadget would also serve this purpose. By replacing each 3-ladder gadget with a $k$-ladder gadget for some sufficiently large $k$, we can create a graph that can be embedded in the plane such that only ladder gadgets cross, and different crossings between ladder gadgets are at least some constant distance (in the graph) apart. Iteratively, using the gadgets in Figures \ref{fig:np-planar1} and \ref{fig:np-planar2}, we remove crossings between ladder gadgets. 

    Let us denote the resulting graph as $G$. As in Theorem \ref{thm:nphard1}, using Lemma \ref{lem:chain} and our observations about the crossing gadget mentioned above, it follows that if the given instance of monotone not-all-equal 3-SAT is not satisfiable, then $G$ does not admit a KT orientation. 

    To prove the converse, let us assume that the given instance of monotone not-all-equal 3-SAT is satisfiable. We orient each variable gadget, as well as ladder gadgets and crossing gadgets, according to the values of the variables (Figures \ref{fig:np-planar1} and \ref{fig:np-planar2} demonstrate the orientations of the crossing gadget). This gives an orientation to all edges except two edges per clause; these we orient by applying Lemma \ref{lem:clause} to each clause gadget. 

    Again, we notice that after deleting every vertex which is a source or sink such that all its neighbours are also sources or sinks by Lemma \ref{lem:sourcesink}, every component that remains contains at most 18 vertices (either 11 vertices from a clause gadget and its neighbours, or 18 vertices from one of the six-cycles $x_1, \dots, x_6$ or $y_1, \dots, y_6$ in a crossing gadget and its neighbours). As before, components containing a clause gadget are KT orientations. 

    It remains to consider components contained in a crossing gadget; but by inspection (or following the same line of reasoning as in Theorem \ref{thm:nphard1}), Figures \ref{fig:np-planar1} and \ref{fig:np-planar2} are KT orientations. This concludes the proof. 
    \end{proof}

\subsection{Graphs with Large Girth}

    In this subsection, we will show: 

    \begin{theorem} \label{thm:npgirth}
        Let $k \in \mathbb{N}$. Deciding whether a given graph has a KT orientation is NP-complete even when restricted to graphs of girth at least $k$. 
    \end{theorem}

    By Theorem \ref{thm:girth} we know that there is a graph with girth $k$ that admits no KT orientation. Let $G$ be such a graph with the minimal number of edges. Now, let $e = uv \in E(G)$; and let $G^- = G \setminus e$. We construct a graph $G^+$ from $G$ by subdividing the edge $e$ by introducing one or two new vertices, as follows: 
    \begin{itemize}
        \item[Case 1:] If there is a KT orientation $D$ of $G^-$ such that $D$ contains no directed path between $u$ and $v$, then we let $G^+$ be the graph obtained from $G^-$ by adding a vertex $x$, setting $y = v$, and adding edges $a = ux$ and $b = vy$ (see Figure \ref{fig:gplus}, left).   
        \item[Case 2:] Otherwise, we let $G^+$ be the graph arising from $G^-$ by adding vertices $x$ and $y$ as well as edges $a = ux$, $b=xy$ and $c = yv$ (see Figure \ref{fig:gplus}, right).   
    \end{itemize}
    Since $G^+$ is obtained from $G$ by subdividing an edge, it follows that $G^+$ has girth at least $k$. 

      \begin{figure}[H]
        \centering
        \includegraphics[]{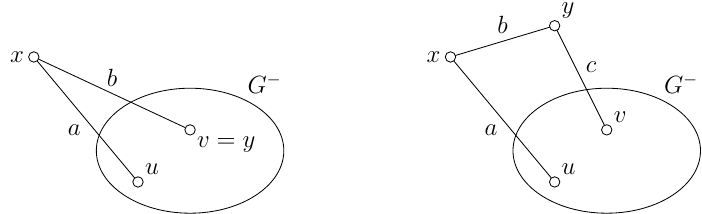}
        \caption{Construction of $G^+$.}
        \label{fig:gplus}
    \end{figure}
    
    \begin{lemma}\label{st:gplus}
        The graph $G^+$ admits a KT orientation such that $x$ is a source and a KT orientation such that $x$ is a sink. Furthermore, $x$ has degree 2 in $G^+$ and $x$ is either a source or a sink in every KT orientation of $G^+$.
    \end{lemma}

    \begin{proof}
    We first claim that there is a KT orientation $D$ of $G^+ \setminus x$ such that there is no directed path between $u$ and $y$. This is immediate from the construction of $G^+$ in the first case, by definition, so we may assume that $v \neq y$. By the minimality of $G$, we know that $G^-$ has a KT orientation $D'$. Since $D'$ is acyclic, and as we can reverse $D'$, we may assume that $D'$ contains no directed path from $u$ to $v$. Now, orienting $yv$ as $v \rightarrow y$, we obtain a KT orientation of $G^+ \setminus x$ with no directed path between $u$ and $y$. 

    We fix a KT orientation $D$ with no directed path between $u$ and $y$. We extend $D$ to the orientations $D_1$ and $D_2$ of $G^+$ by making $x$ a source and a sink, respectively. Our goal is to show that either $D_1$ or $D_2$ is a KT orientation. Then, by reversing all its edges, we obtain another KT orientation of $G^+$ and thus proving the first statement of Lemma \ref{st:gplus}.
    
    Let $i \in \{1, 2\}$. Then $x$ is either a source or a sink in $D_i$, and so not in the interior of a directed path. Therefore, if $D_i$ is not a KT orientation, then by Lemma \ref{lem:disjoint}, there is a vertex $z_i$ such that $D_i$ contains two distinct and internally disjoint directed paths between $x$ and $z_i$; let us call these paths $Q_i^1$ and $Q_i^2$ such that $Q_i^1$ contains $y$ and $Q_i^2$ contains $u$. Since $D$ contains no directed path between $u$ and $y$, it follows that $Q^1_i$ does not contain $u$, and likewise, $Q_i^2$ does not contain $y$. 

    Thus, $D$ contains two directed paths between $z_1$ and $z_2$, namely the concatenation $P^1$ of $Q_2^1$ (from $z_2$ to $y$) and $Q_1^1$ (from $y$ to $z_1$) and the concatenation $P^2$ of $Q_2^2$ (from $z_2$ to $u$) and $Q_1^2$ (from $u$ to $z_1$). To see that these paths are distinct, note that $P^1$ contains $y$, but not $u$, and $P^2$ contains $u$, but not $y$. This is a contradiction, as $D$ is a KT orientation. Thus we have proved the first statement of Lemma \ref{st:gplus}. 
    
    For the second statement, clearly $x$ has degree 2 in $G^+$. Now, suppose for a contradiction that $D$ is a KT orientation of $G^+$ such that $u \rightarrow x$ and $x \rightarrow y$ in $D$ (the reverse case is analogous). We again consider whether $v=y$ or not. If $v=y$, then we obtain a KT orientation of $G$ by orienting edges as in $D$, and letting $u \rightarrow y$; but this contradicts the assumption that $G$ does not admit a KT orientation. 
    
    If $v \neq y$, then we note that $D \setminus x$ has no directed path between $u$ and $v$: On the contrary suppose that $P$ is such a path. Then, if $P$ is a path from $u$ to $v$, it follows that: 
    \begin{itemize}
        \item If $v \rightarrow y$ in $D$, then $D$ contains two distinct directed paths from $u$ to $y$, namely $u, x, y$ and $u, P, v, y$, a contradiction. 
        \item If $y \rightarrow v$ in $D$, then $D$ contains two distinct directed paths from $u$ to $v$, namely $u, x, y, v$ and $u, P, v$, a contradiction. 
    \end{itemize}
    Therefore, $P$ is a path from $v$ to $u$. Now, 
    \begin{itemize}
        \item If $y \rightarrow v$ in $D$, then $u, x, y, v, P, u$ is a directed cycle in $D$, a contradiction.  
        \item If $v \rightarrow y$ in $D$, then $D$ contains two distinct directed paths from $v$ to $y$, namely $v, y$ and $v, P, u, x, y$, and thus a contradiction. 
    \end{itemize}
    This proves the second statement of Lemma \ref{st:gplus}. 
    \end{proof}

    \begin{figure}[t]
        \centering
        \includegraphics[]{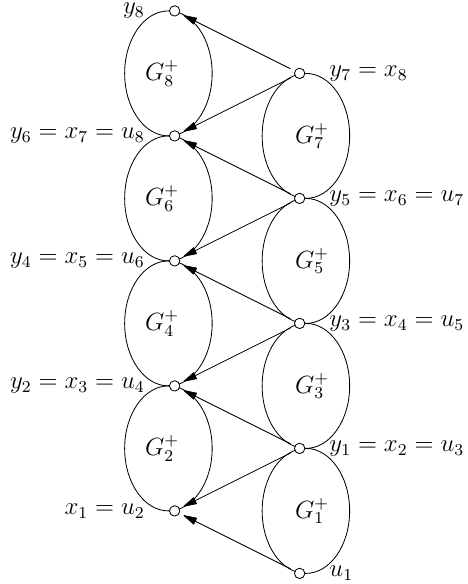}
            \caption{The graph $G^8$.}
            \label{fig:np_copy}
        \end{figure}

    \medskip
    
    Lemma \ref{st:gplus} allows us to create an analogue of the $k$-ladder, as follows (see Figure \ref{fig:np_copy}). We take copies $G_1^+, \dots, G_l^+$ of $G^+$, denoting the copies of the vertices $u, x, y$ in $G_i^+$ as $u_i, x_i, y_i$. Then, we identify the following sets of vertices: 
    \begin{itemize}
        \item $\{u_i, x_{i-1}, y_{i-2}\}$ for $i \in \{3, \dots, l\}$; 
        \item $\{x_1, u_2\}$; 
        \item $\{x_l, y_{l-1}\}$. 
    \end{itemize}
    We denote the resulting graph by $G^l$. Note that $G^l$ has girth at least $k$ (since gluing together graphs along clique cutsets does not decrease girth). By Observation \ref{obs:cc}, it follows that $G^l$ admits a KT orientation, and if $l$ is even, then (using Lemma \ref{st:gplus}) in every KT orientation, one of the following is true:
    \begin{itemize}
        \item $y_{l-1} \rightarrow y_l$ and $u_1 \rightarrow u_2$; or
        \item $y_l \rightarrow y_{l-1}$ and $u_2 \rightarrow u_1$. 
    \end{itemize}
    Next, we create an analogue of the clause gadgets in Theorem \ref{thm:nphard1}. We replace each five-cycle by a longer cycle, by subdividing the edge between $v_1$ and $v_2$, creating a path $P$. Then, for every two consecutive edges of $P$, we add a copy of $G^l$ that ensures that these two edges are oriented in the direction. It follows that $P$ becomes a directed path in every KT orientation of the graph we create. By choosing $l$ sufficiently large, we ensure that the resulting graph has large girth. 

    \begin{proof}[Proof of Theorem \ref{thm:npgirth}]
    Let $I$ be an instance of monotone not-all-equal 3-SAT. Let $H$ be the graph obtained by applying the construction from Theorem \ref{thm:nphard1} to $I$, and then modifying it using $G^l$ (for some sufficiently large $l$) instead of each 3-ladder gadget, as well as the modified clause gadgets from the previous paragraph. Note that for every fixed $k$, we can arrange that $H$ has girth at least $k$. From our construction and the correctness of the construction in Theorem \ref{thm:nphard1}, it immediately follows that if $H$ admits a KT orientation, then $I$ has a satisfying assignment. Conversely, suppose that $I$ has a satisfying assignment. Let $K$ be the orientation resulting from the valid truth assignment for $I$. We orient the input edges according to the assignment and the $G^l$ gadgets between them and the clause gadgets in such a way that all the copies of $G^{+}$ are oriented according to a KT orientation, which is possible by the construction of $H$, the fact that $I$ is satisfying, and the property from Lemma \ref{st:gplus} of $G^+$.  

We now observe that in each KT orientation of a copy of $G^+$, there is no directed path between its vertex $u$ and its vertex $y$; otherwise, along with its vertex $x$, we would have two distinct directed paths between two vertices of $G^+$, a contradiction. 
    
    Therefore, a directed path which contains a vertex of a $G^l$, and not in the first or last three copies of $G^+$ within the $G^l$, has both its ends within the same copy of $G^+$; but each copy of $G^+$ is a KT orientation, a contradiction. 

    Supposing now that $K$ contains two vertices $x$ and $y$ as well as two distinct directed paths $P$ and $Q$ between them. It follows from that neither $P$ nor $Q$ contains a vertex which is in a copy of $G^l$ but not in the first or last three copies of $G^+$ within that copy of the gadget $G^l$. Let $K'$ be the induced subgraph of $G$ in which we remove all but the first and last three copies of $G^+$ within each $G^l$. Then $x, y, P$ and $Q$ are contained in $K'$. We apply Observation \ref{obs:cc} to each component of $K'$, and find that either some copy of $G^+$ is not oriented according to a KT orientation (contrary to the definition of $K$), or one of the clause gadget cycles is not oriented according to a KT orientation (again contrary to the definition of $K$). This contradiction shows that $K'$ is a KT orientation, contrary to the choice of $P$ and $Q$. Therefore, $K$ is a KT orientation. 

    \end{proof}

    \section{Graphs with Small Maximum Degree}
\label{sec:algorithm}
    
    Our main result in this section is the following: 
    \begin{theorem} \label{thm:6main}
    There is a polynomial-time algorithm with the following specifications: 
    \begin{itemize}
        \item Input: A graph $G$ of maximum degree at most 3. 
        \item Output: A KT orientation of $G$, or a determination that none exists. 
    \end{itemize}
    \end{theorem}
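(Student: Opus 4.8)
The plan is to recast the KT condition in a form suited to bounded degree, reduce to $2$-connected graphs, dispatch the cycle case, and then solve the remaining case by a structural analysis that turns global consistency into a bounded local constraint problem.

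The first step is a reformulation: an orientation $D$ of $G$ is a KT orientation if and only if, for every \emph{induced} cycle $C$ of $G$, the restriction of $D$ to $C$ has at least two sources (equivalently, at least two sinks). One direction uses Lemma~\ref{lem:disjoint}: if $D$ is not a KT orientation, Lemma~\ref{lem:disjoint} produces an induced subgraph that is the union of two internally disjoint directed $u$--$v$ paths, i.e.\ an induced cycle on which $u$ is the only source and $v$ the only sink. Conversely, an induced cycle with at most one source is either a directed cycle (so $D$ is not acyclic, hence not a KT orientation) or has exactly one source and one sink, again giving two directed paths between them. This reformulation is the right one when the maximum degree is $3$, because every vertex of an induced cycle $C$ already uses two of its at most three incident edges inside $C$, so whether that vertex is a source of $C$ depends only on very local data.

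Next I would reduce to the $2$-connected case: every cycle of $G$, and hence every induced cycle, lies inside a single block, so by the reformulation above $G$ admits a KT orientation if and only if every block does, and KT orientations of the blocks may be chosen independently and glued (a simpler special case of Observation~\ref{obs:cc}). The block decomposition is computable in polynomial time, so it suffices to handle a $2$-connected subcubic graph $G$. If $G$ is itself a cycle, it admits a KT orientation exactly when its length is at least $4$ — orient it with two sources and two sinks placed alternately, which is possible precisely then — while $C_3$ fails since it contains a triangle.

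The heart of the proof is the case where $G$ is a $2$-connected subcubic graph that is not a cycle, so $G$ is a subdivision of a $2$-connected cubic multigraph, with branch vertices (degree $3$) joined by internally-degree-$2$ branches. Here I would analyse the orientation branch by branch and around each branch vertex: for the purpose of directed paths, each branch is summarised by a bounded amount of data (which of its two directions, if any, carries a directed path, together with the source/sink pattern at its first and last internal vertices), and each branch vertex admits only finitely many locally admissible orientation patterns. The requirement that \emph{every} induced cycle have at least two sources then becomes a constraint-propagation problem over these finitely many per-branch and per-branch-vertex choices, which I would argue is equivalent to a polynomial-size instance of a tractable constraint problem (for instance a $2$-satisfiability instance, with a Boolean variable per local choice), solvable in polynomial time; from a solution one reconstructs an explicit KT orientation, and infeasibility certifies that none exists. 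The main obstacle — and where I expect essentially all of the work to lie, rather than in the reductions — is proving soundness and completeness of this local reduction: that the KT condition can never fail in a genuinely global way, so that any induced cycle with at most one source is already witnessed or forced by the local constraints and long induced cycles contribute nothing beyond what the bounded instance records, and that the interaction between sources/sinks occurring at degree-$2$ subdivision vertices in the interior of a branch and the directed-path behaviour attributed to that branch is accounted for correctly.
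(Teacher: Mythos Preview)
Your reformulation---that $D$ is a KT orientation if and only if every induced cycle carries at least two sources---is correct and follows cleanly from Lemma~\ref{lem:disjoint}; the block reduction and the cycle case are also fine. The gap is in the only nontrivial case: you assert that for a $2$-connected subcubic non-cycle the condition reduces to a polynomial-size tractable CSP such as $2$-SAT, but you do not carry this out, and there is no evident reason it should work. A $2$-connected subcubic graph can have exponentially many induced cycles; the constraint ``at least two sources on $C$'' involves every edge of $C$ and is not a $2$-clause; and you give no argument that long cycles are implied by short ones or by the bounded per-branch data you propose to record. You yourself flag this step as ``where essentially all of the work lies,'' and indeed everything else in the proposal is routine preprocessing---so as written this is a proof outline with the proof missing.

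The paper's argument is entirely different and does not pass through a CSP. The key observation (Lemma~\ref{lem:3col4cycle}) is that if a triangle-free graph has a $3$-colouring in which every $4$-cycle is $2$-coloured, then orienting each edge from low colour to high colour gives a KT orientation: any two internally disjoint directed paths between the same endpoints would each have length two and together form a $3$-coloured $4$-cycle. By Brooks' theorem a triangle-free subcubic graph is $3$-colourable, so the only obstruction is $4$-cycles. The paper then classifies exactly how $4$-cycles can overlap in a subcubic graph (Lemma~\ref{lem:hcomp}): each maximal cluster of overlapping $4$-cycles is one of a short explicit list of small bipartite graphs. Contracting each cluster to one or two vertices keeps the maximum degree at most $3$, so Brooks' theorem applied to the contracted graph yields, after pulling back, a $3$-colouring of $G$ in which every $4$-cycle uses only two colours; a handful of boundary cases (few clusters, exceptional edges) are handled directly. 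The point is that the global constraint you were trying to localise is automatically satisfied once the $4$-cycles are controlled---this is the structural insight your outline is missing.
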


    To prove Theorem \ref{thm:6main}, we need the following: 
    \begin{theorem}[Brooks \cite{brooks}] \label{thm:brooks}
        Let $G$ be a connected graph of maximum degree at most 3 such that $G \neq K_4$. Then $G$ is 3-colourable.         
    \end{theorem}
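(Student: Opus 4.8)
The statement is Brooks' theorem restricted to maximum degree $3$, and the plan is to prove it directly by reducing to the $3$-regular $2$-connected case and then colouring greedily along a cleverly chosen vertex order (all colourings use the palette $\{1,2,3\}$). First I would establish an auxiliary fact: if $H$ is connected, $\Delta(H)\le 3$, and $H$ has a vertex $v$ of degree at most $2$, then $H$ is $3$-colourable. For this, run a BFS from $v$ and order $V(H)$ so that vertices farther from $v$ precede vertices closer to $v$, with $v$ last. Every vertex other than $v$ has a neighbour in the previous BFS layer and hence a neighbour appearing later in the order, so it has at most $\deg(\cdot)-1\le 2$ earlier neighbours; colouring greedily in this order succeeds, and $v$, having at most two neighbours, also receives a colour.

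Next I would reduce to the main case. By the auxiliary fact we may assume $G$ is $3$-regular; since a connected $3$-regular graph on at most $4$ vertices is $K_4$, this forces $|V(G)|\ge 6$. If $G$ has a cut vertex $v$, let $B$ be an endblock (a block meeting the rest of $G$ only in $v$) and let $G'=G-(V(B)\setminus\{v\})$; since $v$ has a neighbour in $B$ and a neighbour outside $B$, its degree in each of $B$ and $G'$ is at most $2$, so by the auxiliary fact both $B$ and $G'$ are $3$-colourable. Every edge of $G$ lies in $B$ or in $G'$, so permuting the colours of the colouring of $G'$ to agree with that of $B$ on $v$ and taking the union yields a proper $3$-colouring of $G$. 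Thus we may assume $G$ is $3$-regular and $2$-connected, with $G\ne K_4$.

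The heart of the argument is the following claim: \emph{there is a vertex $v$ and two neighbours $a,b$ of $v$ with $ab\notin E(G)$ such that $G-\{a,b\}$ is connected}. If $G$ is $3$-connected, take any vertex $v$; its neighbourhood cannot induce a triangle (that would make the component of $v$ equal to $K_4$), so $v$ has two non-adjacent neighbours $a,b$, and $G-\{a,b\}$ is connected because $G$ is $3$-connected. If $G$ is $2$-connected but not $3$-connected, fix a $2$-cut $\{x,y\}$; using $3$-regularity and simplicity one checks that $G-\{x,y\}$ has exactly two or three components, each adjacent to both $x$ and $y$, and then — choosing the $2$-cut and a component so that the component is as small as possible, and examining its structure — one locates the required $v,a,b$. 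This minimal-fragment case analysis is the step I expect to be the main obstacle.

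Finally, given such $v,a,b$, I would order the vertices as $a,b$ followed by the remaining vertices of the connected graph $G-\{a,b\}$ in reverse BFS order from $v$, so that $v$ comes last, and colour greedily with $\{1,2,3\}$: the vertex $a$ receives colour $1$, and $b$ may also receive colour $1$ since $ab\notin E(G)$; each vertex strictly between $b$ and $v$ has a later neighbour in $G-\{a,b\}$ and hence at most two earlier neighbours, so it can be coloured; and $v$ has the two $1$-coloured vertices $a,b$ among its three neighbours, so at most two colours occur on $N(v)$ and a colour remains free for $v$. This produces the desired $3$-colouring and completes the proof.
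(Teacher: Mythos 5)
You are trying to prove a result that the paper does not prove at all: it is stated as Brooks' theorem with a citation, and the paper only remarks that Lov\'asz's proof yields a polynomial-time colouring algorithm. Your overall strategy is in fact the standard Lov\'asz argument (reduce to the $3$-regular $2$-connected case, find a vertex $v$ with two non-adjacent neighbours $a,b$ such that $G-\{a,b\}$ is connected, colour $a,b$ alike, and finish greedily in reverse BFS order ending at $v$), and the parts you carry out — the degree-$\le 2$ greedy lemma, the cut-vertex/endblock gluing, the $3$-connected case, and the final greedy step — are all correct.

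However, there is a genuine gap exactly where you flag one: the key claim is not established when $G$ is $2$-connected but not $3$-connected. You observe correctly that a $2$-cut $\{x,y\}$ separates $G$ into two or three components each attached to both $x$ and $y$, but the phrase ``choosing the $2$-cut and a component so that the component is as small as possible \dots one locates the required $v,a,b$'' is an unproved assertion, not an argument; as written, the proof is incomplete. The standard way to close this (and arguably easier than your minimal-fragment plan) is: pick $x$ belonging to some $2$-cut, so that $G-x$ is connected but has a cut vertex; let $B_1,B_2$ be two endblocks of $G-x$ with attachment vertices $b_1,b_2$. Since $G$ is $2$-connected, $x$ has a neighbour $a\in V(B_1)\setminus\{b_1\}$ and a neighbour $b\in V(B_2)\setminus\{b_2\}$; these lie in distinct blocks and neither is a cut vertex of $G-x$, so $ab\notin E(G)$. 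Moreover $(G-x)-\{a,b\}$ is connected (deleting one non-cut vertex from an endblock leaves it attached at its cut vertex), and $x$ retains its third neighbour there, so $G-\{a,b\}$ is connected with $v=x$. With this inserted, your proof is complete and self-contained, which is more than the paper offers for this statement.
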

    The proof of Theorem \ref{thm:brooks} given by Lov\'asz in \cite{lovasz1975three} yields a polynomial-time algorithm for finding such a colouring. Our main tool in proving Theorem \ref{thm:6main} is a careful analysis of the four-cycles in the input graph, with the goal of applying the following lemma: 
    \begin{lemma} \label{lem:3col4cycle}
        Let $G$ be a triangle-free graph, and let $f$ be a 3-colouring of $G$ such that for every four-cycle $C$ of $G$, we have $|f(V(C))| = 2$ (in other words, every four-cycle is 2-coloured). Then $G$ admits a KT orientation, obtained by orienting each edge $uv$ with $f(u) < f(v)$ as $u \rightarrow v$. 
    \end{lemma}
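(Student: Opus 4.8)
The plan is to take the orientation $D$ of $G$ described in the statement (orient $uv$ as $u \to v$ exactly when $f(u) < f(v)$) and verify directly that it is a KT orientation. The crucial first observation is that colours strictly increase along every directed edge and $f$ takes only the values $1,2,3$; hence every directed path in $D$ has at most three vertices, i.e.\ length at most $2$ (and in particular $D$ is acyclic).

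Next I would suppose for contradiction that $D$ is not a KT orientation and apply Lemma~\ref{lem:disjoint} to obtain distinct vertices $u,v$ together with two internally-disjoint directed paths $P$ and $Q$ between $u$ and $v$ (we only need internal disjointness here, not that the union is induced). Comparing endpoints rules out opposite orientations: if $P$ were directed from $u$ to $v$ and $Q$ from $v$ to $u$, we would get both $f(u) < f(v)$ and $f(v) < f(u)$. So $P$ and $Q$ are both directed from $u$ to $v$, and therefore $f(u) < f(v)$. Since $f(u),f(v) \in \{1,2,3\}$, either $f(v)-f(u)=1$ or $\{f(u),f(v)\}=\{1,3\}$.

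I would then finish with a short case analysis. If $f(v)-f(u)=1$, a directed path from $u$ to $v$ cannot have an internal vertex (its colour would lie strictly between $f(u)$ and $f(v)$), so $P$ and $Q$ are both the single edge $uv$ and hence equal, contradicting $P \neq Q$. If $\{f(u),f(v)\}=\{1,3\}$, then each of $P,Q$ has length $1$ or $2$: two length-$1$ paths would be parallel edges, impossible since $G$ is simple; a length-$1$ path $uv$ together with a length-$2$ path $u \to w \to v$ produces a triangle on $\{u,v,w\}$, contradicting triangle-freeness; and two length-$2$ paths $u \to w_1 \to v$ and $u \to w_2 \to v$ with $w_1 \neq w_2$ give a four-cycle on the four distinct vertices $\{u,w_1,v,w_2\}$ (distinctness follows since $u\neq v$, since $w_1\neq w_2$, and since $w_1,w_2$ are internal vertices) with $f(w_1)=f(w_2)=2$, so its colour set is $\{1,2,3\}$ of size $3$, contradicting the hypothesis that every four-cycle of $G$ is $2$-coloured. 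Every case is contradictory, so $D$ is a KT orientation.

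I do not expect a genuine obstacle here: the ``length at most $2$'' observation reduces everything to inspecting a handful of tiny configurations, and the only care needed is to exclude the degenerate ones (parallel edges, triangles) using simplicity and triangle-freeness, and to note that the two-length-$2$-paths case is precisely where the $2$-colouring hypothesis on four-cycles is used.
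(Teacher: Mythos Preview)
Your proof is correct and follows essentially the same approach as the paper: observe that colours increase along directed edges so directed paths have at most three vertices, then argue that two distinct directed paths between the same pair of vertices force a three-coloured four-cycle. The paper's version is terser (it does not invoke Lemma~\ref{lem:disjoint}, since two distinct length-$\leq 2$ directed paths from $u$ to $v$ are automatically internally disjoint) and collapses your case analysis into a single line, but the content is the same.
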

    \begin{proof}
        We orient the edges of $G$ as follows: for an edge $uv$, if $f(u) < f(v)$, we orient $uv$ as $u \rightarrow v$, and  otherwise, we orient it as $v \rightarrow u$. This orientation is acyclic, and moreover, every directed two-edge path contains a vertex of each of the three colours, and there is no directed path with more than three vertices. Suppose now that the resulting orientation, $D$, is not a KT orientation. Let $u, v \in V(D)$ with two distinct paths $P$ and $Q$ from $u$ to $v$. Then, since $G$ is triangle-free and $D$ has no directed path with more than three vertices, it follows that each of $P$ and $Q$ has exactly three vertices. But then the four-cycle with vertex set $V(P) \cup V(Q)$ in $G$ receives all three colours, contrary to our assumption about the colouring $f$. This concludes the proof. 
    \end{proof}

    Given a graph $G$, an edge $e \in E(G)$ is a \emph{bridge} of $G$ if $G \setminus e$ has more components than $G$ does. The following is straightforward: 

    \begin{lemma} \label{lem:bridge}
        Let $G$ be a graph and let $e$ be a bridge of $G$. Then $G$ admits a KT orientation if and only if every component of $G \setminus e$ does. 
    \end{lemma}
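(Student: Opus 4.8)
The plan is to prove the two implications separately; the forward direction is immediate, and the reverse one is a one-paragraph application of Lemma~\ref{lem:disjoint}.

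For the forward direction I would argue as follows. Suppose $D$ is a KT orientation of $G$ and let $C$ be a component of $G\setminus e$. Then the restriction of $D$ to the vertices and edges of $C$ is an orientation of $C$, and any two distinct directed paths between a pair $u,v$ in this restriction would also be two distinct directed paths between $u$ and $v$ in $D$; since $D$ is a KT orientation, no such pair exists, so the restriction is a KT orientation of $C$.

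For the reverse direction, write $C_1,\dots,C_t$ for the components of $G\setminus e$ and pick a KT orientation $D_i$ of each $C_i$. Form an orientation $D$ of $G$ by orienting every edge of $C_i$ as in $D_i$ and orienting $e$ in either direction. Suppose for contradiction that $D$ is not a KT orientation. By Lemma~\ref{lem:disjoint} there are distinct $u,v\in V(D)$ and an induced subgraph of $D$ that is the union of two internally disjoint directed paths $P$ and $Q$ between $u$ and $v$ (distinct, as in the proof of Lemma~\ref{lem:disjoint}). Since $P$ and $Q$ are distinct they cannot both be the single edge $uv$, so one of them, say $P$, has at least two edges; as $P$ and $Q$ share exactly the ends $u$ and $v$, the underlying undirected graph of $P\cup Q$ is a cycle.

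This is the crux: a bridge lies on no cycle, so $e\notin E(P\cup Q)$, which means $P\cup Q$ is a subgraph of $G\setminus e$. Being connected, it lies inside a single component $C_i$; but then $P$ and $Q$ are two distinct directed paths between $u$ and $v$ in $D_i$, contradicting that $D_i$ is a KT orientation. Hence $D$ is a KT orientation of $G$. I do not expect a genuine obstacle here: the whole content is the observation that the configuration forbidden by Lemma~\ref{lem:disjoint} lives on a cycle and therefore cannot use the bridge $e$. The only step needing a moment's care is checking that $P\cup Q$ really is an undirected cycle — in particular that we may take one of the two paths to have at least two edges — after which everything is bookkeeping about components.
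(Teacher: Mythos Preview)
Your proof is correct. The paper does not actually give a proof of this lemma --- it simply declares it ``straightforward'' and moves on --- so there is nothing to compare against. Your use of Lemma~\ref{lem:disjoint} to produce a cycle, then invoke that bridges lie on no cycle, is a clean way to handle the reverse direction; the small care you take about $P\cup Q$ being a genuine cycle is warranted and handled correctly (simplicity of $G$ rules out both paths being the single edge $uv$). One could alternatively argue directly that any path in $G$ between two vertices of the same component of $G\setminus e$ avoids $e$ (since a path can cross a bridge at most once but would need to cross it an even number of times to return), which bypasses Lemma~\ref{lem:disjoint}, but your route is equally valid.
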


   We first analyze components of four-cycles: 

    \begin{lemma} \label{lem:hcomp}
     Let $G$ be a connected triangle-free graph of maximum degree 3. We define a hypergraph $H$ as follows: 
        \begin{itemize}
            \item $V(H) = V(G)$; 
            \item For every four-cycle in $G$, we add an edge containing its four vertices, to $H$. 
        \end{itemize}
     Let $C$ be a component of the hypergraph $H$ containing at least one edge. Then the induced subgraph $G[V(C)]$ is one of the following graphs:
            \begin{itemize}
                \item A cube minus an edge, the graph on the left in Figure \ref{fig:partcube2}.
                \item A cube minus a vertex, the graph on the left and in the middle in Figure \ref{fig:partcube}. 
                \item A $k$-ladder, such as the graph on the left in Figure \ref{fig:chaindeg3}.
                \item A $k$-ladder with exactly one of $v_1w_k$ and $v_1v_k$ as an additional edge (where $v_1, \dots, v_k, w_1, \dots, w_k$ are as in Definition \ref{def:kladder}). 
                \item $K_{2,3}$.
                \item $K_{3,3}-e$, the graph obtained from $K_{3,3}$ by removing an edge. 
                \item A $k$-ladder with two additional edges, namely either $\{v_1v_k, w_1w_k\}$ or $\{v_1w_k, w_1v_k\}$ (where $v_1, \dots, v_k, w_1, \dots, w_k$ are as in Definition \ref{def:kladder}).
                \item A cube, the graph in Figure \ref{fig:cube}. 
        \begin{figure}[H]
                    \centering
                        \includegraphics[]{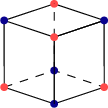}
                    \caption{A cube.}
                    \label{fig:cube}
                \end{figure}
                \item $K_{3,3}$. 
            \end{itemize}
        \end{lemma}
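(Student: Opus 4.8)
The plan is to analyze the structure of a component $C$ of the hypergraph $H$ by understanding how four-cycles of $G$ can overlap, using heavily the degree constraint $\Delta(G) \le 3$ and triangle-freeness. First I would observe that since $G$ has maximum degree $3$ and is triangle-free, any vertex $v$ lies on a bounded number of four-cycles, and two four-cycles sharing a vertex must share at least an edge or a pair of opposite vertices (otherwise we would force a vertex of degree $\ge 4$ or a triangle). The key local fact to establish is: if two distinct four-cycles of $G$ share at least two vertices, then they share exactly an edge (a path on two vertices) or exactly two non-adjacent vertices, and in either case the union is one of a small list of graphs (e.g. $K_{2,3}$, the ``theta''-type graph $\Theta_{2,2,2}$ which is $K_{2,3}$, or a $3 \times 2$ grid which is a $2$-ladder, etc.). Triangle-freeness rules out sharing a single edge in a way that creates a triangle, and the degree bound rules out a vertex being in three ``spread-out'' four-cycles.

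Next I would set up an induction / incremental building argument: start from one four-cycle (which is a $2$-ladder in the ladder terminology, or handle it as a base case), and repeatedly adjoin another four-cycle of $G$ that shares vertices with the part built so far, tracking the possible outcomes. The heart of the argument is a case analysis on how a new four-cycle $C'$ attaches to the current subgraph: it can extend a ladder by one rung (giving a $(k+1)$-ladder), it can ``close up'' a ladder by adding one of the edges $v_1w_k$, $v_1v_k$ (or a symmetric pair, giving the closed ladders), it can attach so as to create a cube-minus-a-vertex or cube-minus-an-edge, and eventually close into a cube, or it can force one of the sporadic graphs $K_{2,3}$, $K_{3,3}-e$, $K_{3,3}$. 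At each step the degree-$3$ bound is what prevents the structure from branching: once a vertex has degree $3$ inside the current subgraph, no further four-cycle of $G$ can use it with a new neighbour, which sharply limits how a new four-cycle can attach. I would maintain the invariant that the current subgraph is one of the listed graphs (or a ladder, possibly with one closing edge) and check that adjoining any admissible $C'$ keeps us in the list or terminates the component.

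I expect the main obstacle to be the sheer volume of the case analysis and making sure it is genuinely exhaustive: one must be careful to enumerate, for each ``open'' configuration (a ladder with its two end-rungs, a ladder with one closing edge, a cube minus a vertex, etc.), exactly which new four-cycles of $G$ are compatible with the degree and triangle-free constraints, and verify no configuration outside the list can arise. In particular the transitions near the ``ends'' of ladders — where there is still room for a degree-$3$ vertex to gain a neighbour — are where the sporadic graphs $K_{2,3}$, $K_{3,3}-e$, $K_{3,3}$ and the cube variants appear, and these need to be pinned down precisely. A secondary subtlety is that $G[V(C)]$ may contain edges of $G$ that are not edges of any four-cycle; I would argue that such ``extra'' edges are forced to be exactly the closing edges $v_1v_k$, $v_1w_k$, etc., again by the degree bound, since any vertex of $V(C)$ already has degree $2$ or $3$ from the four-cycle edges and triangle-freeness forbids most additional chords.
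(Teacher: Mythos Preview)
Your overall strategy---analyze how two four-cycles can overlap, then build the component incrementally by attaching one four-cycle at a time---is exactly the paper's approach. However, your ``key local fact'' is misstated, and since the whole case analysis hangs on it, this matters.

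You claim two distinct four-cycles sharing at least two vertices must share either an edge or \emph{exactly two non-adjacent vertices}. The second option is impossible. If $C_1 = abcd$ and $C_2$ share the non-adjacent pair $\{a,c\}$, then $a,c$ are also non-adjacent in $C_2$ (else $abc$ is a triangle), so $C_2 = axcy$ with $x,y \notin \{b,d\}$, forcing $\deg(a) \ge 4$. The correct dichotomy, which the paper states and uses, is: two distinct four-cycles share either $0$, $2$, or $3$ vertices, and in the latter two cases the shared part is an edge or a $2$-edge path respectively. The $K_{2,3}$ you mention arises precisely from the $3$-vertex overlap, not from a pair of non-adjacent vertices. (Relatedly, your ``$3\times 2$ grid which is a $2$-ladder'' is a $3$-ladder in the paper's indexing: a $k$-ladder has $2k$ vertices, so a single four-cycle is the $2$-ladder.)

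Once this is fixed, the paper's organization is worth noting because it streamlines your case analysis: it branches \emph{globally} on whether some pair of four-cycles in $C$ shares three vertices. If yes, $G[V(C)]$ contains an induced $K_{2,3}$, and a short argument shows the component is $K_{2,3}$, $K_{3,3}-e$, or $K_{3,3}$. If no (every overlap is a single edge), then three mutually overlapping four-cycles force either a $4$-ladder or a cube-minus-a-vertex; the paper then takes a \emph{maximal} $k$-ladder and argues any further four-cycle would either extend it (contradicting maximality) or push a degree above $3$. This maximal-ladder trick replaces most of your open-ended incremental attachment and handles the ``extra edges not in any four-cycle'' worry automatically: the only possible extra edges join $\{v_1,w_1\}$ to $\{v_k,w_k\}$.
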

        \begin{proof}
Since $G$ has maximum degree at most 3, we observe: 
        \begin{observation}\label{obs:share}
            Two distinct edges of $H$ share either $0, 2$ or $3$ vertices. The corresponding four-cycles in $G$ either have no vertices in common, or an edge in common, or a 2-edge path in common. 
        \end{observation}
        
        Since, by assumption, $H$ contains at least one edge, it follows that $G[V(C)]$ contains at least one four-cycle. Moreover, since $G$ is connected and $V(C) \neq V(G)$, it follows that not all vertices of $G[V(C)]$ have degree 3. 
        
        If $C$ has a single edge, then $G[V(C)]$ is a 2-ladder. If $C$ has at least two edges, then there are two cases by Observation \ref{obs:share}:
        \begin{itemize}
            \item Two edges of $C$ share exactly two vertices; or
            \item Two edges of $C$ share exactly three vertices. 
        \end{itemize}
        Let us first consider the second case. Then $G[V(C)]$ contains $K_{2,3}$ as a subgraph, and since $G$ is triangle-free, this is an induced subgraph. If $C$ contains exactly these five vertices, then the fifth outcome of the lemma holds, and we are done. 

        Let $X$ be the vertex set of an induced $K_{2,3}$ in $G[V(C)]$, and let $x_1, x_2$ be the vertices of degree 3 in $G[X]$, and let $y_1, y_2, y_3$ be the vertices of degree 2 in $G[X]$. Then, since $C$ is connected and by Observation \ref{obs:share}, $C$ contains an edge $e$ such that $|e \cap X| \in \{2, 3\}$. Since $N_G(x_1) = N_G(x_2) = \{y_1, y_2, y_3\}$ and the maximum degree of $G$ is 3, it follows that $G[e]$ contains two edges in $G[X]$, and so $|e \cap X| = 3$. We conclude that $e$ contains a vertex $x_3$ adjacent to at least two of $y_1, y_2, y_3$. If $x_3$ is adjacent to all of $y_1, y_2, y_3$, then $G = K_{3,3}$ and $G[V(C)] = K_{3,3}$, and the last outcome holds. Therefore, and by symmetry, we may assume that $y_1, y_2$ are adjacent to $x_3$, and $y_3$ is non-adjacent to $x_3$. It follows that  $G[V(C)]$ contains $K_{3,3} - e$ as an induced subgraph (see Figure \ref{fig:k33minuse}). If $V(C) = X \cup \{x_3\}$, then $G[V(C)]$ is isomorphic to $K_{3,3}-e$, as desired. Otherwise, since $y_3$ and $x_3$ are the only vertices in $X \cup \{x_3\}$ who may have a neighbour outside of $X \cup \{x_3\}$ in $G$, it follows that there is a four-cycle in $G$ containing $y_3$ and $x_3$. Since $y_3, x_3$ are non-adjacent, it follows that they have at least two common neighbours. Since the neighbours of $y_3, x_3$ in $X$ are not common neighbours of $y_3$ and $x_3$, it follows that $y_3, x_3$ have degree at least four. This is a contradiction, and concludes the analysis of the second case. 

        Now we may assume that no two edges of $C$ share three vertices. If $C$ has exactly two edges, then $G[V(C)]$ is a 3-ladder, and the lemma holds. 
        
        If follows that $C$ has at least three edges. Since $C$ is connected, there is an edge of $C$ that shares 2 vertices with 2 different edges. Since $G$ has maximum degree at most 3, there are two possible configurations for a four-cycle sharing two vertices with two other four-cycles: a cube minus a vertex, as depicted in Figure \ref{fig:partcube} (left) and the 4-ladder which is shown in Figure \ref{fig:chaindeg3} (left). 

        Let us first consider the case that $G[V(C)]$ contains a cube minus a vertex (and note that since $G$ has maximum degree 3 and is triangle-free, it follows that this is an induced subgraph). Let $X$ be the vertex set of a cube minus a vertex in $G[V(C)]$. If $V(C) = X$, then we are done. Otherwise, let $x_1, x_2, x_3$ be the three vertices of degree 2 in $G[X]$. If there is a vertex in $G$ adjacent to all of $x_1, x_2, x_3$, then $G$ (and therefore $G[V(C)]$) is a cube, and the second-to-last outcome holds, as desired. Now we may assume that no vertex in $G$ is adjacent to all of $x_1, x_2, x_3$. Since $C$ contains an edge $e$ such that $|e \cap X| \in \{2, 3\}$, it follows that two of $x_1, x_2, x_3$, say $x_1$ and $x_2$, have a common neighbour, say $y$, in $G \setminus X$ (and so $e$ is the four-cycle consisting of $x_1, y, x_2$ and the common neighbour of $x_1$ and $x_2$ in $G[X]$). 

        Now $G[X \cup \{y\}]$ is isomorphic to a cube minus an edge (see Figure \ref{fig:partcube2}). Moreover, the only vertices of $X \cup \{y\}$ which may have a neighbour (in $G$) outside of $X \cup \{y\}$ are $x_3$ and $y$. Since paths from $x_3$ to $y$ with interior in $X$ have length at least three, and $x_3$ is non-adjacent to $y$, it follows that no four-cycle in $G$ contains both $x_3$ and $y$, and so every four-cycle in $G$ which contains a vertex in $X \cup \{y\}$ is contained in $X \cup \{y\}$. Therefore, $V(C) = X \cup \{y\}$, and the statement of the lemma holds.

        Now we may assume that $G[V(C)]$ contains a 4-ladder, and does not contain a cube minus a vertex. Let $k$ be maximum such that $G[V(C)]$ contains a $k$-ladder, and let $v_1, \dots, v_k, w_1, \dots, w_k$ denote the vertex set of a $k$-ladder in $G[V(C)]$, with the notation as in Definition \ref{def:kladder}. 

        Suppose first that $V(C) \neq X$. Then, some four-cycle $Q$ in $G$ shares an edge with $G[X]$, but such that $V(Q) \not\subseteq X$. Since $G$ has maximum degree 3, it follows that $Q$ contains $v_1w_1$ or $v_kw_k$, and by symmetry, we may assume the former. If $Q$ contains no other vertex of $X$, then $X \cup V(Q)$ contains a $(k+1)$-ladder, contrary to the choice of $X$. Therefore, and since $V(Q) \not\subseteq X$, it follows that $Q$ contains exactly one of $v_k, w_k$; by symmetry, we may assume the former. But now $v_k$ has degree at least four: two neighbours in $Q$, along with $w_k$ and $v_{k-1}$. This is a contradiction, proving $X = V(C)$. 

        If $G[X]$ is an induced $k$-ladder, then the statement holds. Since $G$ has maximum degree 3, the possible only additional edges in $G[X]$ are $v_1w_k$, $v_1v_k$, $w_1v_k$, $w_1w_k$. If exactly one of these edges is present, then (possibly switching the roles of $v_i$ and $w_i$ for all $i$) the fourth outcome holds. Therefore, exactly two of these edges are present (using that $G$ has maximum degree 3). But now every vertex in $G[X]$ has degree 3, and so $V(C) = V(G)$, and the third-to-last outcome holds. This concludes the proof. 
        \end{proof}

    Let us briefly describe our proof strategy. We start with the following:
    
        \begin{observation} \label{obs:onecomp}
            Let $G, H$ be as in Lemma \ref{lem:hcomp}. Let $C$ be a component of $H$. If one of the last three outcomes holds for $C$, then $H$ has exactly one component $C$ $\mathrm{(}$and so $V(C) = V(G)$$\mathrm{)}$. 
        \end{observation}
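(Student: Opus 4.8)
The plan is to inspect the last three outcomes of Lemma~\ref{lem:hcomp}---a $k$-ladder with two additional edges (either $\{v_1v_k,w_1w_k\}$ or $\{v_1w_k,w_1v_k\}$), a cube, and $K_{3,3}$---and argue in each case that $G[V(C)]$ is already $3$-regular. Indeed, in each of these three graphs every vertex has degree exactly $3$: for the doubly-augmented $k$-ladder the internal vertices $v_i,w_i$ with $1<i<k$ have their two ladder-neighbours plus the rung, and the four endpoints $v_1,v_k,w_1,w_k$ each pick up the one extra edge to reach degree $3$; for the cube and for $K_{3,3}$ this is immediate.

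Next I would combine this with the two structural facts already recorded in the proof of Lemma~\ref{lem:hcomp}: first, $G$ has maximum degree at most $3$; second---and this is the key point used there---since $G$ is connected, if $V(C)\neq V(G)$ then not all vertices of $G[V(C)]$ can have degree $3$ in $G$ (otherwise no edge could leave $V(C)$, contradicting connectivity, since $V(C)$ would be a union of components of $G$ but $G$ is connected). Having just shown that $G[V(C)]$ is $3$-regular in each of the last three outcomes, every vertex of $V(C)$ already uses up all three of its allowed incident edges inside $G[V(C)]$, so no vertex of $V(C)$ has a neighbour outside $V(C)$. Since $G$ is connected, this forces $V(C)=V(G)$, and hence $H$ has only the one component $C$ (any other component would be a nonempty subset of $V(G)\setminus V(C)=\emptyset$).

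The argument is essentially a one-line degree count once the right observation is isolated, so there is no real obstacle; the only thing to be a little careful about is the doubly-augmented $k$-ladder, where one must check that \emph{either} choice of the pair of extra edges ($\{v_1v_k,w_1w_k\}$ or $\{v_1w_k,w_1v_k\}$) indeed raises all four of $v_1,v_k,w_1,w_k$ to degree $3$ and that the case $k=2$ (where $v_1v_2$ and $w_1w_2$ would be rungs rather than new edges) does not create a degenerate situation---but for $k=2$ the relevant ``doubly-augmented ladder'' is $K_{2,3}$ plus an edge, i.e.\ already covered by earlier outcomes, so one may assume $k\geq 3$ here and the count goes through cleanly.
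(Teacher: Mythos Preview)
Your argument is correct and matches the paper's proof, which is the one-line observation that in each of the last three outcomes every vertex of $G[V(C)]$ has degree $3$, so by the maximum-degree bound and connectivity of $G$ we get $V(C)=V(G)$. Your parenthetical about the $k=2$ doubly-augmented ladder is slightly off (it would be $K_4$, not $K_{2,3}$ plus an edge), but this case is excluded by triangle-freeness anyway and plays no role in the proof.
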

        \begin{proof}
            This follows immediately by observing that every vertex in $G[V(C)]$ has degree 3. 
        \end{proof}

        Next, let's prove that we can disregard cases in which $H$ has only a small number of components. For that, we need the following: 
    
        \begin{observation}\label{obs:comp}
            Let $G, H$ be as in Lemma \ref{lem:hcomp}. Let $C$ be a component of $H$. Then the subgraph of $G$ induced by the vertex set of $C$, $G[V(C)]$, has at most eight KT orientations, and we can compute all KT orientations of $G[V(C)]$ in polynomial time (or decide that none exists). 
        \end{observation}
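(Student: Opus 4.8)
The plan is to use the classification from Lemma \ref{lem:hcomp}: the subgraph $G[V(C)]$ is one of a finite list of graphs (cube minus an edge, cube minus a vertex, $k$-ladder, $k$-ladder with one extra edge, $K_{2,3}$, $K_{3,3}-e$, $k$-ladder with two extra edges, cube, $K_{3,3}$). Since the claim is really two assertions — a bound of eight on the number of KT orientations, and a polynomial-time procedure — I would first note that the polynomial-time part is essentially automatic: $G[V(C)]$ has maximum degree $3$, so it has at most $3|V(C)|/2$ edges and hence at most $2^{3|V(C)|/2}$ orientations; but more to the point, since we will show there are at most eight KT orientations, we can find them by a bounded-depth branching argument. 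The cleaner route is to observe that a KT orientation is determined by very little data once we know the structure, so I would organize the proof around propagating forced edge-directions.

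The core structural observation I would use is the following: if $G[V(C)]$ contains a four-cycle $uxvy$, then in any KT orientation the two $u$–$v$ paths $u\to x\to v$-style must not both be directed paths, which (together with acyclicity, i.e.\ no directed triangle — but $G$ is triangle-free so that is vacuous here) forces the four-cycle to be oriented so that it is \emph{not} the union of two directed $u$–$v$ paths; concretely, each four-cycle must have a "source" or "sink" among its vertices in a suitable sense, and orienting one edge of a four-cycle frequently forces the orientation of the opposite edge (up to the ambiguity of which of the two vertices is the source/sink of a directed 2-path). Since the components $C$ are "chains" of four-cycles glued along edges or $2$-paths (Observation \ref{obs:share}), fixing the orientation of one edge propagates down the chain with only a bounded number of binary choices. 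I would make this precise for the $k$-ladder first (the generic case), showing that once the orientation of the "rung" $v_1w_1$ is chosen, and a constant amount of additional choice is made, everything else is forced; by Observation \ref{obs:rev} the two choices of direction of $v_1w_1$ give KT orientations in pairs, and a short case analysis bounds the total by eight. The remaining graphs on the list ($K_{2,3}$, $K_{3,3}-e$, $K_{3,3}$, cube, cube minus a vertex, cube minus an edge, ladders with one or two extra edges) are each of bounded size or are "ladder plus $O(1)$ extra edges," so either a direct finite check or the same propagation argument with $O(1)$ extra branching points gives the bound of eight and a polynomial-time algorithm.

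The main obstacle I expect is the bookkeeping for the $k$-ladder and its decorated variants: one must verify that the propagation really does leave only a bounded number of free binary choices (not a number growing with $k$), and that the bound on the number of \emph{KT} orientations — as opposed to merely acyclic orientations avoiding monotone four-cycles — is exactly eight rather than some larger constant. I would handle this by a careful induction along the ladder: assuming the orientation of the edges incident to $\{v_i,w_i\}$ is fixed, show the orientation of $v_{i+1}w_{i+1}$, $v_iv_{i+1}$, $w_iw_{i+1}$ is determined with at most one binary choice, and then observe that choices at different rungs are not independent (a monotone four-cycle two steps away would be created), collapsing the count. The extra edges $v_1v_k$, $v_1w_k$, etc., only add a constant number of constraints at the two ends of the ladder, so they do not change the asymptotics; one simply re-examines the ends. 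Throughout, the algorithmic claim follows for free because the whole argument is a bounded-branching search whose leaves we test for the KT property in polynomial time.
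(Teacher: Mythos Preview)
Your overall strategy---use the classification of Lemma~\ref{lem:hcomp}, propagate orientations through overlapping four-cycles, and handle a bounded number of leftover edges---is exactly the paper's approach. However, you have muddled the one fact that makes the argument a two-line proof rather than a case analysis with induction along the ladder.

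The key point is that a four-cycle has \emph{exactly two} KT orientations, namely the two ``alternating'' orientations in which opposite vertices are both sources or both sinks; in particular a KT-oriented $C_4$ contains \emph{no} directed two-edge path, so your parenthetical about ``the ambiguity of which of the two vertices is the source/sink of a directed $2$-path'' is simply wrong. Consequently, orienting a single edge of a four-cycle forces all four edges, not merely ``frequently'' the opposite one. Since by construction the component $C$ is connected in $H$, any two four-cycles in $G[V(C)]$ are linked by a chain of four-cycles sharing edges (Observation~\ref{obs:share}), and the orientation of one edge therefore determines every edge lying in some four-cycle. Inspecting the list in Lemma~\ref{lem:hcomp} shows at most two edges of $G[V(C)]$ fail to lie in a four-cycle, giving at most $2\cdot 2^2=8$ candidate orientations; the paper even remarks that in the cases with two extra edges those edges are in four-cycles too, so eight can be sharpened to four. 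There is no need for your induction along the ladder, no per-rung binary choices to ``collapse,'' and no separate finite checks for the sporadic graphs.

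For the algorithmic part the paper simply invokes its Lemma~\ref{lem:ktalg} to test each of the at most eight candidate orientations; your bounded-branching description is fine but unnecessary once the count is established.
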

        \begin{proof}
            Note that a four-cycle has exactly two KT orientations. Each of the outcomes of Lemma \ref{lem:hcomp} therefore has at most eight KT orientations (orienting one edge in a four-cycle forces the orientations of other edges in four-cycles, and then we have at most two additional edges for which to choose an orientation). By Lemma \ref{lem:ktalg}, we can decide in polynomial time which, if any, of these up to eight orientations are KT orientations, and output them. 
        \end{proof} 
        We point out that ``eight'' can be improved to ``four,'' since in the cases with two additional edges, all edges are in four-cycles. 
        
        Using Observation \ref{obs:comp}, if $H$ has few components, we can check for a KT orientation via brute-force. Otherwise, we will replace each component of $H$ by one or two vertices, 3-colour the resulting graph, and then undo the replacement to obtain a 3-colouring of $G$ such that every four-cycle is two-coloured, as Lemma \ref{lem:3col4cycle} requires. There is one exception to this: It does not work for additional edges (those is the fourth and seventh bullet point of Lemma \ref{lem:hcomp}). We will show that orienting them arbitrarily after applying the above strategy to the rest of the graph is sufficient. In particular, our result implies that if $G$ is connected and $H$ has at least five components, then $G$ admits a KT orientation. 

       The following cases formalizes which ``additional edges'' require special treatment.     
        Let $G$ be a connected triangle-free graph, and let $H$ be as in Lemma \ref{lem:hcomp}. We say that an edge $e \in E(G)$ is \emph{exceptional} if there is a component $C$ of $H$ such that $G[V(C)]$ is a $k$-ladder with exactly one additional edge, which is not in a four-cycle, and, denoting the vertices of the $k$-ladder as $v_1, \dots, v_k, w_1, \dots, w_k$ as in Definition \ref{def:kladder}, one of the following holds: 
        \begin{itemize}
            \item $k$ is odd and $e = v_1v_k$; or
            \item $k$ is even and $e = v_1w_k$.
        \end{itemize} See Figure \ref{fig:exceptional} for an example. 

        \begin{observation} \label{obs:bipcomp}
            Let $G, H$ be as in Lemma \ref{lem:hcomp}. Suppose that $H$ has at least two components. Let $G^*$ be obtained from $G$ by removing all exceptional edges. Then, for each component $C$ of $H$, the graph $G^*[V(C)]$ is bipartite and connected. 
        \end{observation}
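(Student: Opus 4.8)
The plan is to show that each graph $G^*[V(C)]$ is bipartite and connected, by going through the outcomes of Lemma \ref{lem:hcomp} one at a time. The key observation is that removing exceptional edges only affects components $C$ for which $G[V(C)]$ is a $k$-ladder with exactly one additional edge (the fourth outcome), and only removes a single such edge; moreover, by Observation \ref{obs:onecomp}, the last three outcomes of Lemma \ref{lem:hcomp} cannot occur when $H$ has at least two components. So the only outcomes to consider are: a cube minus an edge, a cube minus a vertex, a $k$-ladder, a $k$-ladder with one additional edge, $K_{2,3}$, and $K_{3,3}-e$.

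First I would handle connectivity. For the outcomes where $G[V(C)]$ itself has no exceptional edge (cube minus an edge, cube minus a vertex, plain $k$-ladder, $K_{2,3}$, $K_{3,3}-e$), we have $G^*[V(C)] = G[V(C)]$, which is connected by inspection. For a $k$-ladder with one additional edge $e'$, if $e'$ is exceptional then $G^*[V(C)]$ is the plain $k$-ladder, which is connected; if $e'$ is not exceptional, then $G^*[V(C)] = G[V(C)]$, again connected. Either way $G^*[V(C)]$ is connected.

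Next I would handle bipartiteness. Since $G$ is triangle-free and each of these graphs is small and explicit, I would exhibit a $2$-colouring (equivalently, check there is no odd cycle) in each case. A cube, a cube minus a vertex, a cube minus an edge, $K_{2,3}$, and $K_{3,3}-e$ are all bipartite (subgraphs of $K_{3,3}$ or the cube, both bipartite). A $k$-ladder is bipartite: its natural $2$-colouring alternates along the two paths $v_1 \dots v_k$ and $w_1 \dots w_k$, and the rungs $v_iw_i$ join vertices of opposite colour. The only real content is the $k$-ladder with one additional edge $e' \in \{v_1v_k, v_1w_k, w_1v_k, w_1w_k\}$: in the $k$-ladder's bipartition, $v_1$ and $v_k$ receive the same colour iff $k$ is odd, and $v_1$ and $w_k$ receive the same colour iff $k$ is even. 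Hence $G[V(C)]$ (with the extra edge) is bipartite precisely when $e'$ is \emph{not} one of the edges flagged as exceptional, and in the remaining case — which is exactly the definition of an exceptional edge — $e'$ is removed in passing to $G^*$, leaving the bipartite $k$-ladder. So $G^*[V(C)]$ is bipartite in every case.

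The main obstacle, such as it is, is purely bookkeeping: matching up the parity condition in the definition of ``exceptional'' with the parity at which the additional edge creates an odd cycle in the $k$-ladder, and making sure the symmetry (swapping $v_i \leftrightarrow w_i$ used when the fourth outcome was derived in Lemma \ref{lem:hcomp}) is accounted for so that ``$e = v_1v_k$ with $k$ odd or $e = v_1w_k$ with $k$ even'' really captures all the non-bipartite possibilities. There is no deep difficulty here; the statement follows by a finite case check once the $k$-ladder parity computation is pinned down.
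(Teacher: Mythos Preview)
Your proposal is correct and is exactly what the paper's one-line proof (``This is immediate from Lemma~\ref{lem:hcomp} and Observation~\ref{obs:onecomp}'') is gesturing at: you have simply unpacked the case check that the paper declares immediate. The only point you leave slightly implicit is that the extra clause ``which is not in a four-cycle'' in the definition of exceptional edge is automatically satisfied whenever the parity condition holds (for $k\ge 4$ the relevant additional edge never lies in a four-cycle), so it cannot cause an odd-cycle-creating edge to be retained in $G^*$; this is a routine verification and does not affect the argument.
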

        \begin{proof}
            This is immediate from Lemma \ref{lem:hcomp} and Observation \ref{obs:onecomp}. 
        \end{proof}

        Finally, from Lemma \ref{lem:hcomp}, the following is immediate: 
        \begin{observation}\label{obs:exce}
        Let $G, H$ be as in Lemma \ref{lem:hcomp}. Let $v \in V(G)$. Then at least one of the following holds: 
        \begin{itemize}
            \item $v$ is not incident to an exceptional edge; or
            \item all neighbours of $v$ in $G$ are in the same component in $H$ as $v$ is. 
        \end{itemize}
        \end{observation}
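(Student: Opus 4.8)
The plan is to derive this directly from the structure theorem, Lemma~\ref{lem:hcomp}, essentially by a degree count. Fix $v \in V(G)$. If $v$ is incident to no exceptional edge, the first bullet holds and there is nothing to do, so assume $v$ is an endpoint of some exceptional edge $e$. By the definition of ``exceptional'', there is a component $C$ of $H$ for which $G[V(C)]$ is a $k$-ladder on vertices $v_1,\dots,v_k,w_1,\dots,w_k$ (notation as in Definition~\ref{def:kladder}) together with the single extra edge $e$, where $e$ is one of $v_1v_k$ and $v_1w_k$ and lies in no four-cycle of $G$. Since $e \in E(G[V(C)])$, both of its endpoints lie in $V(C)$; in particular $v \in V(C)$, so $v$ lies in the component $C$ of $H$.

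The second step is the degree count. In the $k$-ladder alone, the only vertices of degree $2$ are $v_1, w_1, v_k, w_k$, every other vertex having degree $3$. The extra edge $e$ joins two of these degree-$2$ vertices — either $v_1$ to $v_k$, or $v_1$ to $w_k$ — so in $G[V(C)]$ both endpoints of $e$ have degree exactly $3$. Hence $\deg_{G[V(C)]}(v) = 3$, and since $G$ has maximum degree at most $3$ this forces $N_G(v) \subseteq V(C)$. Thus every neighbour of $v$ lies in $V(C)$, i.e.\ in the same component of $H$ as $v$, which is the second bullet. This completes the argument.

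I do not anticipate a genuine obstacle: the statement is ``immediate'' given Lemma~\ref{lem:hcomp}, and the only care required is the bookkeeping that the two ends of an exceptional edge are precisely degree-$2$ vertices of the underlying ladder, so that adding the edge saturates them within their component of $H$ and no further edge of $G$ can escape $V(C)$. It is also worth noting, for cleanliness, that $v$ cannot be incident to exceptional edges arising from two different components, since such an edge has both endpoints inside the vertex set of its own component and distinct components of $H$ are vertex-disjoint; but this is not strictly needed for the proof above.
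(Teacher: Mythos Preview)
Your argument is correct and is exactly the intended elaboration of the paper's one-line justification (``immediate from Lemma~\ref{lem:hcomp}''): the endpoints of an exceptional edge are degree-$2$ vertices of the underlying $k$-ladder, so adding the exceptional edge saturates them at degree $3$ inside $G[V(C)]$, leaving no room for neighbours outside $C$. There is nothing to add.
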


        We are now ready to prove the main result of this section. 
    
    \begin{proof}[Proof of Theorem \ref{thm:6main}.]
        We may assume that $G$ is triangle-free; if not, we output that $G$ does not admit KT orientation. By considering each component of $G$ separately, we may assume that $G$ is connected. By Lemma \ref{lem:bridge}, we may further assume that $G$ does not contain any bridge, and so $G$ is $2$-edge-connected. 

        Let $H$ be as in Lemma \ref{lem:hcomp}. If $H$ has at most four components, then $G$ contains at most 8 edges that do not have both ends within the same component of $H$ (using that for each component $C$ of $H$, at most four edges of $G$ have exactly one end in $V(C)$ by Lemma \ref{lem:hcomp}). Using Observation \ref{obs:comp}, we compute a list of all possible KT orientations for $G[V(C)]$ for each component $C$ of $H$, and for each choice of a KT orientation of each component of $H$, we try all orientations of the remaining at most 8 edges. This leads to at most $8^4 \cdot 2^8$ orientations in total; then, by Lemma \ref{lem:ktalg}, we check if any of them are KT orientations, and return the result. 
        
        Now we may assume that $H$ has at least five components. Let $G^*$ be the graph obtained from $G$ by removing exceptional edges. We now construct a graph $H$, iteratively, as follows. For every component $C$ of $H$ with $|V(C)| > 1$, the graph $G^*[V(C)]$ has a unique bipartition (by Observation \ref{obs:bipcomp}), say $(A_C, B_C)$. We replace $V(C)$ by two vertices $a_C$ and $b_C$ such that: 
        \begin{itemize}
            \item $a_Cb_C$ is an edge; 
            \item $a_C$ is adjacent to all neighbours of $A_C$ outside of $C$; 
            \item $b_C$ is adjacent to all neighbours of $B_C$ outside of $C$; 
            \item if the only neighbour of $a_C$ is $b_C$, delete $a_C$; and 
            \item if the only neighbour of $b_C$ is $a_C$, delete $b_C$. 
        \end{itemize}
        Denote the resulting graph by $G'$. 

        Next, we will examine all outcomes of Lemma \ref{lem:hcomp}, and check that all vertices of $G'$ have degree at most 3. Let $C$ be a component of $H$. The following, using Observation \ref{obs:onecomp}, are the possibilities for $G^*[V(C)]$: 
        \begin{itemize}
            \item A cube minus an edge, the graph on the left in Figure \ref{fig:partcube2}. Both $a_C$ and $b_C$ have degree at most two.  

                \begin{figure}[H]
                    \centering
                        \includegraphics[]{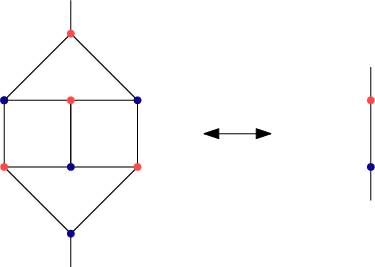}
                    \caption{The cube minus an edge, plus possible edges to the remainder of the graph (left), and the replacement $a_C$ and $b_C$ (right).}
                    \label{fig:partcube2}
                \end{figure}
            
            \item A cube minus a vertex, the graph shown both on the left and in the middle in Figure \ref{fig:partcube} (the graph is shown twice to highlight similarities and differences with other cases). Note that one of $a_C, b_C$ has no neighbours outside $C$ and hence will be deleted; the remaining vertex has degree at most three, since there are at most three edges with exactly one end in $V(C)$. 
            
                \begin{figure}[H]
                    \centering
                        \includegraphics[]{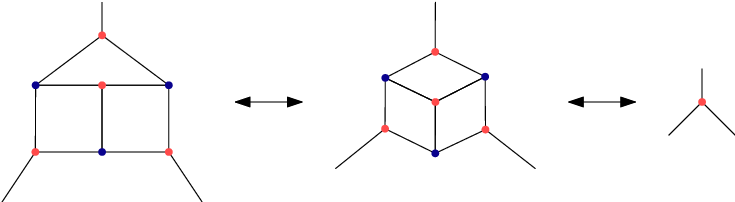}
                    \caption{The cube minus a vertex, plus possible edges to the remainder of the graph (left and middle), and the replacement (right); note that one of $a_C, b_C$ was deleted.}
                    \label{fig:partcube}
                \end{figure}

            \item A $k$-ladder, such as the graph on the left in Figure \ref{fig:chaindeg3}. Both $a_C$ and $b_C$ have degree at most three. 
            
            \begin{figure}[H]
                \centering
                    \includegraphics[]{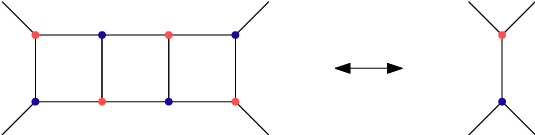}
                \caption{A $4$-ladder, plus possible edges to the remainder of the graph (left), and the replacement $a_C$ and $b_C$ (right).} \label{fig:chaindeg3}
            \end{figure}
            
            \item $K_{3,3}-e$, the graph on the left in Figure \ref{fig:k33minuse}. Both $a_C$ and $b_C$ have degree at most two. 

            \begin{figure}[H]
                \centering
                    \includegraphics[]{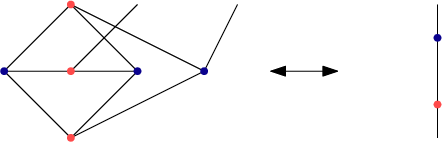}
                \caption{$K_{3,3}-e$, plus possible edges to the remainder of the graph (left), and the replacement $a_C$ and $b_C$ (right)} \label{fig:k33minuse}
            \end{figure}
   
            \item $K_{2,3}$, the graph on the left in Figure \ref{fig:k23}. Note that one of $a_C, b_C$ has no neighbours outside $C$ and hence will be deleted; the remaining vertex has degree at most three. 
            
            \begin{figure}[H]
                \centering
                    \includegraphics[]{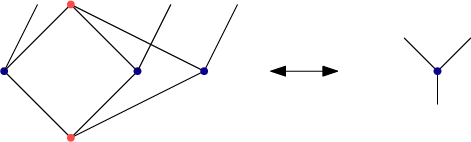}
                \caption{$K_{2,3}$, plus possible edges to the remainder of the graph (left), and the replacement (right); note that one of $a_C, b_C$ was deleted.} \label{fig:k23}
            \end{figure}
        \end{itemize} 

        Therefore, the graph $G'$ has maximum degree 3, and by construction, $G'$ is connected. Moreover, the number of vertices of $G'$ is at least the number of components of $H$, and so $|V(G')| > 4$. By Theorem \ref{thm:brooks}, we conclude that $G'$ has a 3-colouring, say $f$.

        Then, $G^*$ has a 3-colouring $f'$ arising  from $f$, by extending the colouring as shown in Figures \ref{fig:partcube2}--\ref{fig:k23}, that is, for every component $C$, we proceed as follows: 
        \begin{itemize}
            \item If $a_C$ was deleted, restore $a_C$ and assign to it a colour distinct from that of $b_C$;
            \item If $b_C$ was deleted, restore $b_C$ and assign to it a colour distinct from that of $a_C$;
            \item Assign the colour of $a_C$ to every vertex in $A_C$, and the colour of $b_C$ to every vertex in $B_C$.
        \end{itemize}
        By construction, every four-cycle in $G^*$ is coloured with two colours by $f'$. 

        Let $D$ be the KT orientation of $G^*$ arising from Lemma \ref{lem:3col4cycle}. It remains to find a suitable orientation for the exceptional edges; we orient them arbitrarily and claim that this yields a KT orientation $D'$ of $G$. 

    Suppose not; then there exist vertices $u$ and $v$ in $G$ with two internally disjoint paths $P$ and $Q$ between them. The union of $P$ and $Q$ is a cycle $S$ in $G$ such that the orientations of edges of $S$ change at most twice along $S$; in other words, at most two vertices of $S$ are sources or sinks with respect to the set of edges of $S$. Since $D$ is a KT orientation of $G$, it follows that $S$ contains an exceptional edge $e$. Let $C$ be the component of $H$ containing the ends of $e$; then $G^*[V(C)]$ is a $k$-ladder. If $V(S) \subseteq C$, then, since $e$ is not contained in a four-cycle of $G$, it follows that $S$ contains at least three vertices which are sources or sinks (in $C$ and hence in $S$), and so at least three direction changes, as desired. 
    
        It follows that $S$ is not contained in $C$, and so $S$ contains exactly two edges with one end in $C$ and the other in $V(G) \setminus C$, say $e'$ and $e''$; note that there are exactly two such edges. If $e'$ and $e''$ share an end, say $v$, it follows that $v \not\in V(C)$. Now, either $V(G) = \{v\} \cup V(C)$ (and so $H$ has at most two components, a case we already covered), or $v$ has degree 3 and is incident with an edge $j \neq e', e''$; but now $j$ is a bridge in $G$ (with one component of $G \setminus e$ being $G[\{v\} \cup V(C)]$), again a contradiction. Therefore, $e'$ and $e''$ are disjoint. 

        Let $e = yz$. By Observation \ref{obs:exce}, it follows that $e', e''$ and $e$ are pairwise disjoint. Let $x$ be the end of $e'$ in $C$, and let $w$ be the end of $e''$ in $V(C)$. See Figure \ref{fig:exceptional}. 
        \begin{figure}[H]
            \centering
            \includegraphics[width=0.7\linewidth]{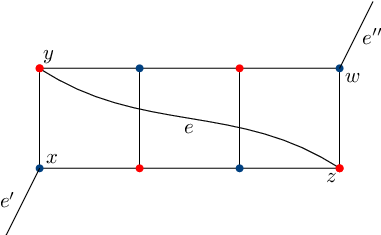}
            \caption{An exceptional edge $yz$ for a $4$-ladder. Note that 4 is the smallest $k$ such that a $k$-ladder can have an exceptional edge.}
            \label{fig:exceptional}
        \end{figure}
        
        Since $y, z$ are sources or sinks in $D$, it follows that one of $y$ and $z$ is a source or sink in $D'$. Therefore, a direction change of $S$ occurs at one of $y$ and $z$. 

        Moreover, by Observation \ref{obs:exce}, it follows that none of the vertices contained in $e'$ and $e''$ are incident with exceptional edges. Since at least one end of $e'$ is not coloured 2, and at least one end of $e''$ is not coloured 2, it follows that one end of $e'$ and one end of $e''$ is a source or a sink in $D'$. Consequently, $S$ contains at least three vertices which are sources or sinks (namely, one end of each of the three disjoint edges $e, e', e''$). This shows that $S$ has more than two direction changes, contrary to our assumption. So $D$ is a KT orientation. 
    \end{proof}

    \section{Acknowledgement}
        We would like to thank the organisers of the DIMACS REU 2023 for providing the wonderful opportunity to work on this project during the summer. We would also like to express our gratitude to G\'abor Tardos for providing invaluable feedback to improve the overall presentation. 
         
\printbibliography

@article{colorful_ind_subg,
	title = {Colorful induced subgraphs},
	journal = {Discrete Mathematics},
	volume = {101},
	number = {1},
	pages = {165-169},
	year = {1992},
	issn = {0012-365X},
	url = {https://www.sciencedirect.com/science/article/pii/0012365X9290600K},
	author = {Hal A. Kierstead and William T. Trotter} 
}

@article{lovasz1975three,
  title={Three short proofs in graph theory},
  author={Lov{\'a}sz, L{\'a}szl{\'o}},
  journal={Journal of Combinatorial Theory, Series B},
  volume={19},
  number={3},
  pages={269--271},
  year={1975},
  publisher={Academic Press}
}

@article{larsen1995fractional,
  title={The fractional chromatic number of Mycielski's graphs},
  author={Larsen, Michael and Propp, James and Ullman, Daniel},
  journal={Journal of Graph Theory},
  volume={19},
  number={3},
  pages={411--416},
  year={1995},
  publisher={Wiley Online Library}
}

@article{duffus1995computational,
  title={On the computational complexity of ordered subgraph recognition},
  author={Duffus, Dwight and Ginn, Mark and R{\"o}dl, Vojt{\v{e}}ch},
  journal={Random Structures \& Algorithms},
  volume={7},
  number={3},
  pages={223--268},
  year={1995},
  publisher={Wiley Online Library}
}

@book{gyarfas1985problems,
  title={Problems from the world surrounding perfect graphs},
  author={Gy{\'a}rf{\'a}s, Andr{\'a}s},
  number={177},
  year={1985},
  publisher={MTA Sz{\'a}m{\'\i}t{\'a}stechnikai {\'e}s Automatiz{\'a}l{\'a}si Kutat{\'o} Int{\'e}zet}
}

@article{chi_boundedness,
  title={A survey of $\chi$-boundedness},
  author={Scott, Alex and Seymour, Paul},
  journal={Journal of Graph Theory},
  volume={95},
  number={3},
  pages={473--504},
  year={2020},
  publisher={Wiley Online Library},
    url = "https://onlinelibrary.wiley.com/doi/full/10.1002/jgt.22601"
}

@article{twincut,
	title={A tamed family of triangle-free graphs with unbounded chromatic number},
	author={Bonnet, {\'E}douard and Bourneuf, Romain and Duron, Julien and Geniet, Colin and Thomass{\'e}, St{\'e}phan and Trotignon, Nicolas},
	journal={arXiv preprint arXiv:2304.04296},
	year={2023},
	url = "https://arxiv.org/abs/2304.04296"
}

@article{trianglefree_contraexample,
	url = {https://doi.org/10.1016%2Fj.jctb.2022.09.001},
	year = {2023},
	month = {1},
	publisher = {Elsevier {BV}},
	volume = {158},
	pages = {63--69},
	author = {Alvaro Carbonero and Patrick Hompe and Benjamin Moore and Sophie Spirkl},
	title = {A counterexample to a conjecture about triangle-free induced subgraphs of graphs with large chromatic number},
	journal = {Journal of Combinatorial Theory, Series B}
}

@article{brianski2024separating,
  title={Separating polynomial $\chi$-boundedness from $\chi$-boundedness},
  author={Bria{\'n}ski, Marcin and Davies, James and Walczak, Bartosz},
  journal={Combinatorica},
  volume={44},
  number={1},
  pages={1--8},
  year={2024},
  publisher={Springer}
}

@article{large_chromatic,
    author={Girão, António and Illingworth, Freddie and Powierski, Emil and Savery, Michael and Scott, Alex and \; Tamitegama, Youri and Tan, Jane},
	title={Induced subgraphs of induced subgraphs of large chromatic number}, 
    journal={Combinatorica},
	year={2024},
    volume={44},
    pages={37--62},
    url = "https://doi.org/10.1007/s00493-023-00061-4"
}

@article{fractional_chromatic_zykov,
	title = {The fractional chromatic number of Zykov products of graphs},
	journal = {Applied Mathematics Letters},
	volume = {24},
	number = {4},
	pages = {432-437},
	year = {2011},
	issn = {0893-9659},
	url = {https://www.sciencedirect.com/science/article/pii/S0893965910003915},
	author = {Pierre Charbit and Jean Sébastien Sereni},
	keywords = {Fractional coloring, Zykov graphs, Triangle-free graphs},
}

@article{brooks,
  title={On colouring the nodes of a network},
  author={Brooks, R. L.},
  journal={Mathematical Proceedings of the Cambridge Philosophical Society},
  volume={37},
  number={2},
  pages={194--197},
  year={1941},
  doi={10.1017/S030500410002168X}
}

@inproceedings{nae3sat,
author = {Schaefer, Thomas J.},
title = {The Complexity of Satisfiability Problems},
year = {1978},
isbn = {9781450374378},
publisher = {Association for Computing Machinery},
address = {New York, NY, USA},
url = {https://doi.org/10.1145/800133.804350},
doi = {10.1145/800133.804350},
booktitle = {Proceedings of the Tenth Annual ACM Symposium on Theory of Computing},
pages = {216–226},
numpages = {11},
location = {San Diego, California, USA},
series = {STOC '78}
}

@article{bermond2013directed,
  title={Directed acyclic graphs with the unique dipath property},
  author={Bermond, Jean-Claude and Cosnard, Michel and P{\'e}rennes, St{\'e}phane},
  journal={Theoretical Computer Science},
  volume={504},
  pages={5--11},
  year={2013},
  publisher={Elsevier}
}

@article{araujo2012good,
  title={Good edge-labelling of graphs},
  author={Araújo, Júlio and Nathann Cohen and Frédéric Giroire and Frédéric Havet},
  journal={Discrete Applied Mathematics},
  volume={160},
  number={18},
  pages={2502--2513},
  year={2012},
  publisher={Elsevier}
}

@misc{sadhukhan2024shift,
      title={Shift Graphs, Chromatic Number and Acyclic One-Path Orientations}, 
      author={Arpan Sadhukhan},
      year={2024},
      eprint={2308.14010},
      archivePrefix={arXiv},
      primaryClass={math.CO}
}

@book{mukherjee1997optical,
  title={Optical communication networks},
  author={Mukherjee, Biswanath},
  year={1997},
  month={7},
  publisher={New York, NY: McGraw-Hill}
}
    
\end{document}